\numberwithin{equation}{section}
\newtheorem{theorem}{Theorem}[section]
\newtheorem{proposition}[theorem]{Proposition}
\newtheorem{lemma}[theorem]{Lemma}
\newtheorem{definition}[theorem]{Definition}
\theoremstyle{remark}
\newtheorem{remark}[theorem]{Remark}
\definecolor{darkblue}{rgb}{0,0,0.7}
\newcommand{\al}{\alpha}
\newcommand{\be}{\beta}
\newcommand{\e}{\epsilon}
\newcommand{\ga}{{\gamma}}
\newcommand{\la}{\lambda}
\newcommand{\si}{\sigma}
\newcommand{\td}{\tilde}
\newcommand{\De}{\Delta}
\newcommand{\Bp}{\dot B_{p,\infty}^{3/p-1}}
\newcommand{\R}{{\mathbb R }}
\newcommand{\Z}{{\mathbb Z}}
\newcommand{\cN}{{\mathcal N}}
\newcommand{\pd}{{\partial}}
\newcommand{\nb}{{\nabla}}
\newcommand{\I}{\infty}
\renewcommand{\div}{\mathop{\mathrm{div}}}
\newcommand{\donothing}[1]{{}}
\newcommand{\EQ}[1]{\begin{equation}\begin{split} #1 \end{split}\end{equation}}
\DeclareMathOperator*{\esssup}{ess\,sup}
\newcommand{\xRightarrow}[2][]{\ext@arrow 0359\Rightarrowfill@{#1}{#2}}
\newcommand{\loc}{\mathrm{loc}} 
\newcommand{\uloc}{\mathrm{uloc}}
\let\OLDthebibliography\thebibliography
\renewcommand\thebibliography[1]{
  \OLDthebibliography{#1}
  \setlength{\parskip}{1pt}
  \setlength{\itemsep}{1pt plus 0.3ex}
}
\newcommand{\cmild}{{C_{B}}}
\newcommand{\cupper}{{c_1}}
\newcommand{\clower}{{c_2}}
\newcommand{\eLP}{{\epsilon_2}}
\newcommand{\ePhys}{{\epsilon_1}}
\title{
Remarks on the separation of Navier-Stokes flows
}
\author{Zachary Bradshaw}
\date{\today}
\begin{document}

\maketitle

\begin{abstract}
Recently, strong evidence has accumulated that some solutions to the Navier-Stokes equations in physically meaningful classes are not unique.
The primary purpose of this paper is to establish necessary properties for the error of hypothetical non-unique Navier-Stokes flows under conditions motivated by the scaling of the equations.  
Our first set of results show that some scales are necessarily active---comparable in norm to the full error---as solutions separate. 
`Scale' is interpreted in several ways, namely via  algebraic bounds, the Fourier transform and discrete volume elements. These results include a new type of uniqueness criteria which is stated in terms of the error.
The second result is a conditional predictability criteria for the separation of small perturbations. An implication is that the error necessarily  activates at larger scales as flows de-correlate. The last result says that the error of the hypothetical non-unique Leray-Hopf solutions of Jia and \v Sver\' ak locally grows in a self-similar fashion. Consequently, within the Leray-Hopf class, energy can de-correlate at a rate which is faster than linear. This contrasts numerical work on predictability which identifies a linear rate. This discrepancy can likely be explained by the fact that non-uniqueness can be viewed as a perturbation of a singular flow.
\end{abstract}

\section{Introduction}

We consider the Navier-Stokes equations,
\begin{equation}\label{eq.ns}\tag{NS}
\partial_t u - \nu \Delta u +u\cdot \nb u +\nb p= 0;\qquad \nb \cdot u =0,
\end{equation}
which model the motion of a viscous incompressible fluid with velocity $u$ and its associated pressure $p$. We consider the problem on $\R^3\times (0,T)$ for a time $T>0$.
A foundational mathematical treatment of the problem was provided by Leray in \cite{leray} where global weak solutions were constructed for finite energy data.  These solutions are shown to satisfy a global energy inequality, and can therefore be viewed as physically reasonable. Solutions resembling those constructed by Leray are referred to as Leray-Hopf  solutions.  Although it has been nearly a century since Leray's original contribution, important questions remain open about \eqref{eq.ns}. For example, it is not known if Leray-Hopf solutions can possess finite time singularities. It is also unknown if unforced Leray-Hopf solutions  are unique.  In recent years, evidence has accumulated suggesting negative answers to these questions. In the direction of blow-up, Tao has constructed singular solutions for a nonlinear model replicating certain features of \eqref{eq.ns} \cite{Tao}. Regarding uniqueness,  Buckmaster and Vicol have demonstrated non-uniqueness in a class of solutions which is weaker than the Leray-Hopf class using convex integration \cite{BV}. These solutions are not known to satisfy the global or local energy inequalities.  Under non-physical forcing,  non-uniqueness has been shown in the Leray-Hopf class by Albritton, Bru\'e and Colombo \cite{AlBrCo}. In the Leray-Hopf class with no forcing  a conjectural research program of Jia and \v Sver\' ak \cite{JS,JiaSverakIll}, as well as the numerical work of Guillod and \v Sver\' ak \cite{GuSv}, provide strong evidence for non-uniqueness.

The possibility that a deterministic PDE gives rise to multiple solutions is a concern in modeling and forecasting.  Expecting non-uniqueness for \eqref{eq.ns}, it is   important to {understand how non-uniqueness evolves}. Ideally, all possible solutions remain close together, indicating they are predictable from a single flow. This can be viewed as a  sort of stability. The possibility that two solutions can  separate explosively---meaning, e.g.,~that time derivatives of the separation rate are unbounded at $t=0$---is more concerning.  
It is therefore natural to ask:
   \textit{What are necessary properties for the error of non-unique solutions to \eqref{eq.ns}?}
One approach to answering this question would be to establish uniqueness criteria \textit{in terms of the error} by shifting the condition from the background flow to the error.  Importantly, the error can belong to different classes than $u$ or $v$. So, it is feasible that neither $u$ nor $v$ belong to, e.g., a Prodi-Serrin-type space-time Lebesuge space which implies uniqueness, but the error $w=u-v$ does. In the case of non-unique self-similar solutions (in an appropriate class of weak solutions, see, e.g., \cite{BaSeSv}), the error $w$ satisfies $w \in L^\I(0,\I;L^3(\R^3))$. If either $u$ or $v$ belong to this class, then $w\equiv 0$. Therefore, it should not in general be expected that the error fails to be in strong integrability classes. This is good news from the perspective of forecasting; if this were not the case, then non-uniqueness would necessarily imply a large error.

It is not obvious how to write uniqueness criteria in terms of the error.  To prove uniqueness, one typically needs to manage the following sort of bound:
\EQ{\label{ineq.discussion}
[\text{$w$-quantity}] \lesssim   [\text{critical $u$-quantity}]\cdot [\text{$w$-quantity}]. 
}
Classical uniqueness criteria guarantee the critical factor is small. So, the right-hand side is absorbed in the left-hand side and the error consequently vanishes. 
Conditions on $w$ do not help close the estimate. Therefore, uniqueness criteria in terms of the error must look different.

The first goal of this paper is to develop non-uniqueness criteria   in terms of the error. 
Additional  properties of the error will be explored based on the condition that the error separates \textit{en masse} at a scaling invariant rate. In particular,  for several interpretations of ``scale,'' we will show that  intermediate scales which are comparable to $t^{1/2}$ are necessarily active as two solutions separate. 
The second part of this paper examines how small perturbations de-correlate and is motivated by work on predictability \cite{BoMu3D,BoMu2D,PDS}. In this direction, we establish a conditional predictability criteria which states that energy de-correlation requires a certain configuration of activity below wavenumbers in the dissipative range.  Additionally, we  explain how the conjectural existence of non-unique solutions of Jia and \v Sver\' ak \cite{JiaSverakIll} would imply explosive separation within the Leray-Hopf class, indicating that the linear separation rate simulated in \cite{BoMu3D,BoMu2D} is not universal.

\subsection*{Properties of the error of Navier-Stokes flows}
We will use $u$ and $v$ to denote the background flows, with $w=u-v$ being the error.
Before giving the  results, the main assumptions are stated.
The first condition   is that  
\begin{equation}\tag{A1}\label{A1}
\sup_{0<t}  \sup_{3< q\leq \I}  \bigg(   t^{1/2 - 3/(2q)}\big(\|u\|_{L^q} +\|v\|_{L^q}\big)(t)  +   \| u\|_{\dot B^{-1+3/q}_{q,\I}} (t)+ \| v\|_{\dot B^{-1+3/q}_{q,\I}} (t)\bigg) \leq \cupper,
\end{equation}
for some  $\cupper>0$, which can be large. 
In analogy with the blow-up literature, we think of this as a Type I condition in that the controlled quantities are all dimensionless; Type II conditions would only assert weaker bounds.
A centered Type I condition is also natural, namely
\begin{equation}\tag{A1'}\label{A1'}
|u|(x,t) + |v|(x,t) \leq \frac {c_1} {|x|+\sqrt t}.
\end{equation} 
These are motivated by the scaling of \eqref{eq.ns} and the fact that, roughly speaking, if the flows satisfied stronger upper bounds, then they would agree due to the classical uniqueness theories. Thus, these are some of the strictest scenario where non-uniqueness is plausible. Note that \eqref{A1'} implies \eqref{A1}.

The second type of condition essentially says  that  $w$ does not converge to $0$ at a critical rate at  $t=0$, i.e.,
\begin{equation}\tag{A2}\label{A2}
\|w\|_{L^p}(t) t^{1/2-3/(2p)} \geq \clower,
\end{equation} 
for some  $\clower>0$ and some $3<p\leq \I$.
Compared to \eqref{A1} and \eqref{A1'}, it is  less clear that \eqref{A2}   holds in general non-uniqueness scenarios. 
In fact, proving it   does  would amount to shifting a uniqueness criteria from the background flows to the error, which we discussed above. 
When our results use  \eqref{A2}, they should be interpreted as statements about which scales are active given the assumption that   a bulk scaling invariant quantity for the error is non-vanishing at $t=0$.

 We are now ready to state our main results, which apply to three interpretations of ``scale.''
 We define the different solution classes appearing in these theorems in Section 2. Note that for $3<p\leq \I$ there is a local well-posedness theory \cite{Kato,GIM} which implies that any initial data $u_0\in L^p$ produces a unique strong solution  $u$ on $[0,T_0]$ where 
\[
T_0= \td c_p \| u_0\|_{L^p}^{2p/(3-p)},
\]
satisfying 
\[
\sup_{0\leq t\leq T_0}\|u\|_{L^p}(t)\leq 2 \|u_0\|_{L^p},\]
where $\td c_p$ is a constant which only depends on $p$.

 Our first result has in mind a non-uniqueness scenario driven by an isolated singularity at the space-time origin. This is the case for the hypothetical non-uniqueness examples of Jia and \v Sver\' ak \cite{JiaSverakIll}. By \cite{BP1}, these solutions satisfy
 \[
|w(x,t)|\leq \frac {C(u_0) t^{3/2}} {(|x|+\sqrt t)^4}.
 \]
See also \cite{BP2}.
Consequently, for $\ga>0$
\EQ{\label{ineq.similarity-example}
|w(x,t)|\chi_{B^c_{\ga^{-1}\sqrt t}}(x) \lesssim \frac 1 {(\ga^{-1}+1)\sqrt t}. 
}
The prefactor can be made small indicating the activity in the region $|x|\geq \ga^{-1} \sqrt t$ becomes smaller as $\ga^{-1}$ grows. 
On the other hand, by scaling
\[
\| w(\cdot ,t )\|_{L^\I} = \frac 1 {\sqrt t} \|w(\cdot,1)\|_{L^\I}.
\]
Hence, for self-similar non-uniqueness, the majority of the error's activity is necessarily contained within some region $|x|\lesssim \sqrt t$. Our first theorem demonstrates that this property of self-similar uniqueness extends to general scenarios under assumption \eqref{A1'}. It additionally states upper and lower bounds on the error.

\begin{theorem}[Algebraic interpretation of ``scale''] \label{thrm.main}
Assume $u$ and $v$ are mild solutions with the same initial data $u_0$.  
The following hold:  
\begin{enumerate}
    \item (Uniqueness criteria) Fix $3\leq p<\I$. Under assumption \eqref{A1'}, there exist $\ePhys(\cupper,p)>0$ and $\eta = \e_1/(c_1-\e_1)$ so that, if   there exists $T$ so that 
    \EQ{\label{cond.algebraicUniqueness}
    \sup_{0<t<T} \frac {  \|w(x,t)\chi_{B_{\eta^{-1} \sqrt t}} \|_{L^p}}  {  \|w(x,t)\chi_{B^c_{\eta^{-1} \sqrt t}} \|_{L^p}} < \frac \ePhys 2,
    }
    then $w=0$.   
\item (Lower bound on the error)  Let $M_0(t) = 2\max\{ \|u(t)\|_{\I},\|v(t)\|_{\I}\}$.
Assume \eqref{A2} for some non-negative real number   $c_2$. Then, for any $t>0$, $a\geq 0$ and $c_3>0$, there exists  $b=b(c_2,c_3,\td c_\I,a,t,M_0(t))>1$ so that any pair of mild solutions $u$ and $v$ to \eqref{eq.ns} with the same data do \textbf{not} satisfy  
\begin{equation} \label{A3}
 \sup_{x\in \R^3} |w(x,t)| \frac  {(b|x|+\sqrt t)^{a+1}} {\sqrt t^{a}}< c_3.
\end{equation}
\item (Upper bound on the error) Under assumption \eqref{A1'},  
\[
\sup_{x\in \R^3,0<t} |w(x,t)| \frac  {(|x|+\sqrt t)^{4}} {\sqrt{t}^3}\lesssim_{\cupper,C_B}1.
\]
where $C_B$ is a universal constant introduced above \eqref{ineq.bilinear}.
 
\end{enumerate}
 
\end{theorem}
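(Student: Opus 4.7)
The plan is to tackle the three parts in sequence, each built on the Duhamel identity
$$w(t)=-\int_0^t e^{(t-s)\Delta}\mathbb{P}\nabla\cdot\bigl(u\otimes w + w\otimes v\bigr)(s)\,ds$$
for the error (obtained by subtracting the integral equations for the two mild solutions), together with pointwise or $L^p$ bilinear estimates for the Oseen kernel. For part (1), I would fix $p\in[3,\I)$ and split $w(s)=w\chi_{B_{\eta^{-1}\sqrt s}}+w\chi_{B^c_{\eta^{-1}\sqrt s}}$ inside the bilinear term. Under \eqref{A1'}, on the far annulus $|y|>\eta^{-1}\sqrt s$ the bound $|u|+|v|\leq 2c_1/(|y|+\sqrt s)$ is majorized by $2c_1\eta/((1+\eta)\sqrt s)$, and the choice $\eta=\e_1/(c_1-\e_1)$ collapses this to $2\e_1/\sqrt s$; on the near ball the hypothesis \eqref{cond.algebraicUniqueness} effectively upgrades the cruder factor $2c_1/\sqrt s$ by an extra $\e_1$. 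Inserting into the standard critical bilinear estimate in $L^p$ and bootstrapping on $N(t)=\sup_{0<s<t}s^{1/2-3/(2p)}\|w\|_{L^p}(s)$ yields $N\leq C(c_1,p)\,\e_1\,N$, which forces $N\equiv 0$ and hence $w\equiv 0$ for $\e_1$ small enough.

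For part (2), suppose for contradiction that $|w(x,t)|<c_3\sqrt t^a/(b|x|+\sqrt t)^{a+1}$ for every $x$. When $p<\I$ in \eqref{A2}, the substitution $y=bx/\sqrt t$ directly gives
$$\|w(t)\|_{L^p}\leq c_3\,b^{-3/p}\,C_{a,p}^{1/p}\,t^{-1/2+3/(2p)},\qquad C_{a,p}:=\int_{\R^3}(|z|+1)^{-(a+1)p}\,dz<\I$$
(the integral is finite because $(a+1)p>3$), and choosing $b>(c_3/c_2)^{p/3}C_{a,p}^{1/3}$ contradicts \eqref{A2}. In the case $p=\I$, if $c_3<c_2$ the pointwise bound at $x=0$ already contradicts $\|w\|_{L^\I}(t)\geq c_2/\sqrt t$; otherwise I would invoke the local $L^\I$ well-posedness constant $\td c_\I$ applied to data of size $M_0(t)$ to control $\|\nabla w\|_{L^\I}(t)$, so that the mean value theorem yields a ball of some radius $\delta(\td c_\I,M_0(t),t)$ about a near-maximizer of $\|w\|_{L^\I}$ on which $|w(\cdot,t)|\geq c_2/(4\sqrt t)$, after which $b$ is taken large enough that the putative pointwise bound drops below this value on that ball.

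For part (3) the plan is a pointwise bootstrap. Introduce the weighted norms $\|f\|_{X_k}=\sup_{x,t}|f(x,t)|(|x|+\sqrt t)^k/\sqrt t^{k-1}$, so that \eqref{A1'} gives $\|u\|_{X_1},\|v\|_{X_1}\leq c_1$ and hence $\|w\|_{X_1}\leq 2c_1$. The key bilinear estimate to establish is
$$\|B(f,g)\|_{X_{k+1}}\leq C'_B\,\|f\|_{X_1}\,\|g\|_{X_k},\qquad k\in\{1,2,3\},$$
i.e., convolution with an $X_1$ factor raises the pointwise self-similar decay rate by one. Applying this three times to $w=-B(u,w)-B(w,v)$ produces in turn $w\in X_2$, $X_3$, $X_4$, with constant depending only on $c_1$ and $C_B$, which is the claimed bound.

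The main technical hurdle is the pointwise bilinear estimate used in part (3): verifying
$$\int_0^t\!\!\int_{\R^3}\frac{\sqrt s^{k-1}}{(|x-y|+\sqrt{t-s})^4\,(|y|+\sqrt s)^{k+1}}\,dy\,ds\lesssim \frac{\sqrt t^k}{(|x|+\sqrt t)^{k+1}}$$
requires decomposing the $y$-integral into the regions $|y|\ll|x|$, $|y|\sim|x|$, and $|y|\gg|x|$, and tracking the competing decays of the Oseen kernel and of the integrand in each region. A simpler variant of the same estimate underlies the bootstrap in part (1), while the $p=\I$ subcase of part (2) is the secondary obstacle, requiring parabolic local regularity to convert an $L^\I$ lower bound into a pointwise lower bound on a ball of quantified size.
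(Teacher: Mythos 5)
Parts (1) and (3) of your proposal are essentially the paper's own arguments. For (1), the near/far splitting at radius $\eta^{-1}\sqrt s$, the observation that \eqref{A1'} gives $|u|+|v|\lesssim \e_1/\sqrt s$ on the far region while \eqref{cond.algebraicUniqueness} supplies the extra factor of $\e_1$ on the near ball, and the bootstrap on $\sup_{0<s<t}s^{1/2-3/(2p)}\|w\|_{L^p}(s)$ are exactly what the paper does. For (3), your three-step bootstrap in the weighted classes $X_k$ is the paper's iteration, and your ``main technical hurdle'' — the weighted space-time convolution inequality — is precisely the content of the estimate the paper imports from \cite[Lemma 2.1]{Tsai-DSSI}; proving it by decomposing into $|y|\ll|x|$, $|y|\sim|x|$, $|y|\gg|x|$ is a legitimate alternative to citing that lemma. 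Your direct integration for Item (2) when $3<p<\I$ is also correct and is in fact more elementary than anything in the paper.

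The genuine gap is in Item (2) for $p=\I$ with $c_3\geq c_2$, which is really the case the theorem is about (this is why $b$ is allowed to depend on $\td c_\I$ and on $M_0(t)=2\max\{\|u(t)\|_{\I},\|v(t)\|_{\I}\}$, and why the paper's contradiction is with $\|w(t)\|_{L^\I}\geq c_2/\sqrt t$). You propose to control $\|\nb w(\cdot,t)\|_{L^\I}$ by ``invoking the local well-posedness constant $\td c_\I$ applied to data of size $M_0(t)$,'' but the local theory started from the data $u(t),v(t)$ only produces gradient bounds at times \emph{strictly after} $t$, of size $\sim M_0(t)(s-t)^{-1/2}$; it gives nothing at time $t$ itself, and the hypotheses provide no bound on $M_0$ at any earlier time from which one could smooth forward to $t$. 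If you instead run your mean-value argument at a later time $t'=t+\tau$ where the gradient is controlled, then \eqref{A2} does produce a ball of quantified radius on which $|w(\cdot,t')|\gtrsim c_2/\sqrt{t'}$ — but \eqref{A3} is assumed only at time $t$, so you must propagate the spatial concentration of $w(\cdot,t)$ in the ball of radius $\sim\sqrt t/b$ forward to time $t'$. That forward propagation (the heat semigroup applied to $L^\I$ data concentrated on a small ball decays by a factor made small by taking $b$ large, while the Duhamel correction over $[t,t']$ contributes only $O((t'-t)^{1/2}M_0^2)$, absorbed by the choice of $\tau$) is exactly the missing step, and it is the content of the paper's sparseness Lemma~\ref{lemma.heat}. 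Without it, or some substitute for it, the $p=\I$, $c_3\geq c_2$ case of Item (2) does not close.
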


Combining Items (2) and (3), we conclude that, for any $0<a<3$, there exists a sequence $(x_k,t_k)\to (0,0)$   so that 
\[
\frac {\sqrt t^{a}} {c_3(b|x_k|+\sqrt t_k)^{a+1}} \leq |w(x_k,t_k)|. 
\]
Item (1) implies a unique continuation property: 
    If $w=0$ in a neighborhood of $(0,0)$, then $w=0$ globally.
Note that this is automatically satisfied by self-similar solutions due to scaling. It is also implied more generally by real analyticity at positive times.  

To  our knowledge Item (1) constitutes  a new type of uniqueness criteria. We develop similar criteria   below in Theorems \ref{thrm.mainFrequancy} and \ref{thrm.mainDiscrete}.

\bigskip 
 We next consider an analogous result in terms of frequency.
The \textit{a priori} algebraic bounds of self-similar solutions provided  the context for Theorem \ref{thrm.main}.  
The next proposition  provides a similar context in a frequency sense. In particular, it gives  an upper bound on the separation rate of individual Littlewood-Paley frequencies (see Section \ref{sec.prelim} for the definitions of the Littlewood-Paley decompositio, Besov spaces and local energy solutions).

\begin{proposition}[Separation bounds in frequency]\label{prop.frequencyContext}
Assume $u_0\in L^p(\R^3\setminus \{0\})$ for some $p>3$, is divergence free and is $(-1)$-homogeneous. If $u$ and $v$ are self-similar  {local energy solutions}  to \eqref{eq.ns} with the same data $u_0$, then 
\[
\|  \Delta_{< J} w\|_{L^\I}(t)\lesssim 2^{4J} t^{3/2}.
\]
\end{proposition}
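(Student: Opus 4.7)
The plan is to combine self-similarity with the Duhamel representation for $w$ and a Fourier-side estimate. Writing $u(x,t)=t^{-1/2}U(x/\sqrt t)$, $v(x,t)=t^{-1/2}V(x/\sqrt t)$, and $W=U-V$, self-similarity gives $w(x,t)=t^{-1/2}W(x/\sqrt t)$ and, after rescaling, $\hat w(\xi,t) = t\,\hat W(\sqrt t\,\xi)$. The analysis of self-similar local energy solutions with $(-1)$-homogeneous data in $L^p(\R^3\setminus\{0\})$, $p>3$, together with the pointwise decay of the type established in \cite{BP1,BP2}, supplies the \emph{a priori} bounds $\|U\|_{L^\I}+\|V\|_{L^\I}<\I$ and $|W(x)|\lesssim (1+|x|)^{-4}$, so in particular $W\in L^1(\R^3)$.

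Since $u(0)=v(0)=u_0$, the linear parts of the Duhamel formula cancel and
\[
w(t)=-\int_0^t e^{(t-s)\Delta}\mathbb{P}\nabla\cdot B(s)\,ds,\qquad B:=u\otimes w+w\otimes v.
\]
On the Fourier side, $|\widehat{\mathbb{P}\nabla\cdot B}(\xi,s)|\lesssim |\xi|\|B(s)\|_{L^1}$, which gives
\[
|\hat w(\xi,t)|\lesssim|\xi|\int_0^t e^{-(t-s)|\xi|^2}\|B(s)\|_{L^1}\,ds.
\]
Self-similarity together with the profile bounds yields $\|B(s)\|_{L^1}\leq \bigl(\|u(s)\|_\I+\|v(s)\|_\I\bigr)\|w(s)\|_{L^1}\lesssim s^{-1/2}\cdot s\,\|W\|_{L^1}\lesssim s^{1/2}$. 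Evaluating the time integral in the two regimes $t|\xi|^2\leq 1$ and $t|\xi|^2>1$ produces the envelope
\[
|\hat w(\xi,t)|\lesssim\min\bigl\{|\xi|\,t^{3/2},\ t^{1/2}/|\xi|\bigr\}.
\]

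To conclude, I would bound $\|\Delta_{<J}w\|_\I\leq\|\widehat{\Delta_{<J}w}\|_{L^1}$ and integrate the above on $|\xi|\lesssim 2^J$. When $2^J\leq t^{-1/2}$ only the first branch contributes, and $\int_{|\xi|\lesssim 2^J}|\xi|t^{3/2}\,d\xi\lesssim 2^{4J}t^{3/2}$. When $2^J>t^{-1/2}$, splitting at $|\xi|=t^{-1/2}$, the low-frequency piece contributes $\lesssim t^{-1/2}\lesssim 2^{4J}t^{3/2}$ and the high-frequency piece contributes $\lesssim t^{1/2}\cdot 2^{2J}\lesssim 2^{4J}t^{3/2}$, where both inequalities exploit $2^{2J}t\geq 1$.

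The main obstacle is the \emph{a priori} integrability $W\in L^1$: this is the quantitative input on self-similar profiles that drives the estimate, and attempting to work directly in physical space (via Taylor expanding $\phi_J$ and using $\int w=0$) would instead require $|y|W\in L^1$, which fails for the borderline $(1+|x|)^{-4}$ decay. Routing the argument through the Duhamel identity on the Fourier side sidesteps this issue, and once $W\in L^1$ is secured, the remainder is routine two-regime Fourier bookkeeping.
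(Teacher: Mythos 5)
Your proof is correct, but it takes a genuinely different route from the paper. The paper argues purely by scaling on Littlewood--Paley blocks: from the decay $|w(x,1)|\lesssim (1+|x|)^{-4}$ of \cite{BP1} it asserts $w(\cdot,1)\in \dot B^{-4}_{\infty,\infty}$, then uses the exact self-similar relation $\|\dot\Delta_j w(\cdot,t)\|_{L^\infty}=t^{-1/2}\|\dot\Delta_i w(\cdot,1)\|_{L^\infty}$ with $2^{2(j-i)}t=1$ to get $\|\dot\Delta_j w(\cdot,t)\|_{L^\infty}\lesssim 2^{4j}t^{3/2}$, and sums over $j<J$. You instead run the zero-initial-error Duhamel identity through the Fourier transform, using $W\in L^1$ and the bounded profiles to get $\|B(s)\|_{L^1}\lesssim s^{1/2}$ and hence $|\hat w(\xi,t)|\lesssim\min\{|\xi|\,t^{3/2},\,t^{1/2}/|\xi|\}$, after which the two-regime integration over $|\xi|\lesssim 2^J$ is routine and checks out. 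The trade-off is instructive: the paper's argument is shorter but its key step --- passing from the pointwise bound $(1+|x|)^{-4}$ to membership in $\dot B^{-4}_{\infty,\infty}$ --- is exactly the point you flag as delicate, since $L^1$ decay alone only yields $\|\dot\Delta_j f\|_{L^\infty}\lesssim 2^{3j}$ for $j\to-\infty$ and the extra power would classically require a vanishing mean plus $|y|W\in L^1$, which the borderline decay does not provide. Your route extracts that missing low-frequency power of $|\xi|$ from the divergence structure of the nonlinearity in the Duhamel formula rather than from the profile's decay, so it both proves the stated estimate and, as a byproduct, substantiates the Besov membership the paper's proof takes as read. Both arguments ultimately rest on the same \emph{a priori} inputs from \cite{BP1} (bounded profiles and $|W(x)|\lesssim(1+|x|)^{-4}$), so neither is more demanding in hypotheses.
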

This proposition states that, within the self-similar class, the error below a specific frequency vanishes as $t\to 0^+$ and the rate improves at lower scales. Therefore, because the full self-similar error does not vanish (it blows up at the rate $t^{-1/2}$), it must concentrate at smaller and smaller scales as $t\to 0^+$. Also note that the time exponent matches that in Item (3) of Theorem \ref{thrm.main}.
The next theorem establishes properties consistent with this for more general classes of solution.  It is formatted to replicate the structure and themes of Theorem \ref{thrm.main}, but through a different lens. The same comment applies to Theorem \ref{thrm.mainDiscrete} below.

\begin{theorem}[Frequency interpretation of ``scale'']\label{thrm.mainFrequancy} Assume $u$ and $v$ are mild solutions with the same initial data $u_0$. 
The following hold: 
\begin{enumerate}
    \item \text{(Uniqueness criteria)} Assume \eqref{A1} for some value $\cupper$. Fix $p\in  (3,\I]$. There exists $\eLP(\cupper,p)>0$ and $J_1(t)$ with  $2^{J_1(t)}\sim t^{-1/2}$,
  so that, if there exists $T>0$ with 
    \[ \sup_{0<t<T} \frac {\|w_{\geq J_1}\|_{p}}{\|w_{< J_1}\|_p}\leq \eLP,\] then $w=0$. 
\item (Low frequencies are active)  Let $M_0(t)=2\max\{\|u\|_{L^p}(t),\|v\|_{L^p}(t)\}$.
 If $w$  satisfies  \eqref{A2} for some $\clower$ and $p\in (3,\I]$, then 
there exist $\ga(t)$ and $J_2(t)$ with
\[
\ga   =\frac {c_2} {4M_0 {(4\td c_p M_0^{2p/(3-p)}+t)^{1/2-3/(2p)}}},
\]
and  
\[
2^{J_2} \sim \frac  {4M_0 {(4\td c_p M_0^{2p/(3-p)}+t)^{1/2-3/(2p)}}} {c_2}\bigg( \frac {c_2} { 4 M_0^2 C_B (4\td c_p M_0^{2p/(3-p)}+t)^{1/2-3/(2p)}}   \bigg)^{\frac {p} {3-p}},  
\]
so that  we have
\[
\frac{\| w_{\geq J_2}  \|_{L^p} (t)} {\|w_{<J_2}  \|_{L^p}(t)} \leq \ga.
\]
\item (Intermediate frequencies are active) If    \eqref{A1} and \eqref{A2} hold for some $p\in (3,\I]$, then we  have $2^{J_2}\sim \sqrt{t}^{-1}$ and there exists $J_3<J_2$ with $2^{J_3}\sim \sqrt t ^{-1}$ and 
\EQ{ \label{sim.finiteBand}
\| w_{J_3\leq j \leq J_2}\|_p \sim \| w \|_p \sim t^{-1/2+3/(2p)}.
} 
\end{enumerate}
\end{theorem}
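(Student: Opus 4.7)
The plan is to treat the three parts in the order (2), (3), (1), since each step builds on the previous and (1) is the most subtle.

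For part (2), the starting point is the mild identity $w(t) = -B(u,w)(t) - B(w,v)(t)$ where $B(f,g)(t)=\int_0^t e^{(t-s)\De}\mathbb{P}\nb\cdot(f\otimes g)(s)\,ds$. I would apply the high-frequency projection $\De_{\geq J}$ and use the heat-kernel damping estimate of the form
\[
\|\De_{\geq J} e^{\sigma\De}\mathbb{P}\nb f\|_{L^p} \lesssim \sigma^{-1/2-3/(2p)}(1+\sigma 2^{2J})^{-N}\|f\|_{L^{p/2}},
\]
together with H\"older $\|u\otimes w\|_{L^{p/2}}\leq \|u\|_{L^p}\|w\|_{L^p}$. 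For $\|u\|_{L^p}$, I would use the local well-posedness bound recalled before Theorem~\ref{thrm.main}: $\|u\|_{L^p}(s)\leq 2\|u_0\|_{L^p}\leq M_0(s)$ on the Kato lifetime $[0,T_0]$ with $T_0=\tilde c_p M_0(0)^{2p/(3-p)}$, and then patch with the critical rate beyond $T_0$, which explains the combination $(4T_0+t)^{1/2-3/(2p)}$. Combining these, one obtains an upper bound of the schematic form $\|w_{\geq J}\|_{L^p}\lesssim 2^{-J\alpha}\,M_0^2(4T_0+t)^{1/2-3/(2p)}\,\|w\|_{L^p}$. The stated $J_2$ and $\ga$ are precisely the values for which this bound, combined with the lower bound $\|w\|_{L^p}(t)\geq \clower t^{-1/2+3/(2p)}$ from \eqref{A2} and the triangle inequality $\|w_{<J_2}\|_{L^p}\geq \|w\|_{L^p}-\|w_{\geq J_2}\|_{L^p}$, gives the ratio bound $\|w_{\geq J_2}\|_{L^p}/\|w_{<J_2}\|_{L^p}\leq \ga$.

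For part (3), I would substitute \eqref{A1} into the formulas. Since \eqref{A1} with exponent $q=p$ yields $\|u\|_{L^p},\|v\|_{L^p}\lesssim c_1 t^{-1/2+3/(2p)}$, one checks $T_0 \sim t$, hence $(4T_0+t)^{1/2-3/(2p)}\sim t^{1/2-3/(2p)}$, and $M_0(t)(4T_0+t)^{1/2-3/(2p)}\sim c_1$. Plugging into the expression for $J_2$ produces $2^{J_2}\sim t^{-1/2}$ by a direct exponent calculation. To construct $J_3$, I would use the Besov part of \eqref{A1}, which persists for $w$: we have $\|\De_j w\|_{L^p}\lesssim c_1 2^{j(1-3/p)}$. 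Therefore
\[
\|w_{<J_3}\|_{L^p}\leq \sum_{j<J_3}\|\De_j w\|_{L^p} \lesssim c_1\, 2^{J_3(1-3/p)}.
\]
Choosing $2^{J_3}=\eta\, t^{-1/2}$ for a small $\eta=\eta(c_1,c_2)$ keeps $J_3<J_2$ while forcing $\|w_{<J_3}\|_{L^p}\leq \tfrac14 \clower t^{-1/2+3/(2p)}\leq \tfrac14\|w\|_{L^p}$ by \eqref{A2}. Combined with $\|w_{\geq J_2}\|_{L^p}\leq \tfrac{\ga}{1+\ga}\|w\|_{L^p}$ from part (2), this gives the sandwich \eqref{sim.finiteBand}; the matching upper bound on $\|w\|_{L^p}$ follows from the $L^p$ bound in \eqref{A1}.

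For part (1), the plan is a bootstrap in the scaling-invariant norm $\|w\|_{X_T}:=\sup_{0<t<T}t^{1/2-3/(2p)}\|w\|_{L^p}(t)$. The standard bilinear estimate under \eqref{A1} yields only $\|w\|_{X_T}\leq 2C_B \cupper\|w\|_{X_T}$, which does not close for large $\cupper$. The idea is to upgrade this using the ratio hypothesis: write $w=w_{<J_1}+w_{\geq J_1}$ with $2^{J_1}\sim t^{-1/2}$ and estimate the two pieces separately. For the low-frequency part, Bernstein at the dissipative scale gives $\|w_{<J_1}\|_{L^r}\lesssim t^{-3/2(1/p-1/r)}\|w_{<J_1}\|_{L^p}$ for $r>p$, which upgrades the bilinear gain beyond what \eqref{A1} alone provides and produces an extra decaying factor in the bilinear integral. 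For the high-frequency part the ratio hypothesis supplies the factor $\eLP$ directly. Assembling both contributions gives an inequality $\|w\|_{X_T}\leq C(\cupper,p)\eLP\,\|w\|_{X_T}$, and choosing $\eLP(\cupper,p)<1/C(\cupper,p)$ forces $\|w\|_{X_T}=0$, from which $w\equiv 0$ follows on $[0,T]$ and then by propagation globally.

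The main obstacle is part (1): arranging the frequency decomposition of the bilinear term so that the smallness of the ratio $\eLP$ (and not $\cupper$) becomes the effective bilinear constant. This requires a careful paraproduct-type analysis of $u\otimes w+w\otimes v$, isolating the components where $w$ is evaluated at high frequency, and handling the fact that $J_1(t)$ varies with $t$ inside the time integral. Parts (2) and (3) are essentially exercises in heat-kernel smoothing and Bernstein estimates once the formulas for $J_2,J_3,\ga$ are reverse-engineered from the desired balance.
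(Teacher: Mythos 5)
Your part (3) follows the paper's argument essentially verbatim (Besov bound on $\|w_{<J_3}\|_{L^p}$ forcing $2^{J_3}$ to be a small multiple of $t^{-1/2}$, combined with the output of part (2)), so I will focus on parts (2) and (1), where there are genuine problems.

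For part (2) you take a different route from the paper --- a direct high-frequency damping estimate on the Duhamel integral over $(0,t)$ rather than the paper's contradiction argument --- and this route has a gap. Your bound $\|u\otimes w\|_{L^{p/2}}(s)\leq \|u\|_{L^p}(s)\|w\|_{L^p}(s)$ must be integrated over all $s\in(0,t)$, but part (2) assumes only \eqref{A2} and the finiteness of the pointwise-in-time quantity $M_0(t)$; nothing controls $\|u\|_{L^p}(s)$ as $s\to 0^+$. Your fix via Kato's theory started at $s=0$ is self-defeating: it requires $u_0\in L^p$, and then the same theory gives \emph{uniqueness} on $[0,T_0]$, i.e.\ $w\equiv 0$ there, contradicting \eqref{A2}. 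The paper's argument is built precisely to avoid this: it fixes $t_0>0$, writes Duhamel forward on $[t_0,t]$ with $t-t_0$ chosen explicitly in terms of $M_0(t_0)$ so that the nonlinear increment is below $\clower t^{3/(2p)-1/2}/2$, and then shows that if the low frequencies of $w(t_0)$ were too small, Lemma \ref{lemma.heat.frequency} would force $\|e^{(t-t_0)\De}w(t_0)\|_{L^p}$ below $\clower t^{3/(2p)-1/2}/4$, contradicting \eqref{A2} at the \emph{later} time $t$. That forward-in-time contradiction structure, with the sparseness/heat-decay lemma supplying the estimate on the linear term, is the missing ingredient, and it is also what produces the specific formulas for $\ga$ and $J_2$.

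For part (1) your skeleton (split $w$ at $J_1$, use $\eLP$ on the high part, find smallness for the low part) is the right shape, but you have not resolved what you yourself identify as the main obstacle, and your target inequality $\|w\|_{X_T}\leq C(\cupper,p)\,\eLP\|w\|_{X_T}$ cannot be correct: the term $B(u,w_{<J_1})$ carries no factor of $\eLP$ whatsoever, so the smallness of the low-frequency contribution must come from elsewhere. The paper's mechanism is to project the \emph{equation} onto frequencies $<J_1$ and exploit the paraproduct identity $(f_{<J_1}g)_{<J_1}=(f_{<J_1}g_{<J_1+2})_{<J_1}$, so that the low-low interaction only sees $v_{<J_1+2}$, which by the Besov part of \eqref{A1} satisfies $\|v_{<J_1+2}\|_{L^\I}\leq 4\cupper 2^{J_1}$; taking the implicit constant in $2^{J_1}\sim t^{-1/2}$ small relative to $\cupper$ then makes the low-low constant $\cmild\cupper 2^{J_1+2}t^{1/2}<1/3$, and the contraction closes in $\sup_s s^{1/2}\|w_{<J_1}\|$ (with a detour through $L^q$, $q<\I$, to avoid the divergent integral $\int_0^t (t-s)^{-1/2}s^{-1}\,ds$ when $p=\I$). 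Your Bernstein upgrade of $w_{<J_1}$ to $L^r$, $r>p$, can in principle extract the same small factor $\epsilon^{3(1/p-1/r)}$ from the implicit constant $\epsilon$ in $2^{J_1}=\epsilon t^{-1/2}$, but you would need to say so explicitly and restate the closing inequality as $\|w\|_{X_T}\leq(\tfrac12+C\cupper\eLP)\|w\|_{X_T}$ (together with the a priori finiteness $\|w\|_{X_T}\leq 2\cupper$ from \eqref{A1}); the phrase ``extra decaying factor in the bilinear integral'' is a red herring, since the whole estimate is scale-invariant and no extra time decay is available.
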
 

The first item can be interpreted as saying that, if $w\neq 0$, then high modes are active to some extent. This complements the statement of the second item. The same comment applies to Theorem \ref{thrm.mainDiscrete} which appears below.

\bigskip 

Our third iteration of this theme involves a discretized interpretation of ``scale'' which we presently introduce.
Fix a lattice of cubes $\{Q_i\}$ with disjoint interiors, volumes $h^3$, and whose closures cover $\R^3$. Suppose that one cube is centered at the origin.  Let
\[
J_{h} u_0(x) = \sum_j \chi_{Q_j}(x) \frac 1 {|Q_j|} \int_{Q_j} u_0 (y)\,dy.
\]
This is effectively a  discretization of the flow based on volume elements and is a whole-space version of an interpolant operator which has been used extensively to study the number of degrees of freedom in 2D NS flows \cite{FT,FTiti,JonesTiti,JonesTiti2} and more recently in  the Azouni, Olson \& Titi data assimilation paradigm \cite{AOT} and  descendent ideas \cite{CHLMNW}.

\begin{theorem}[Discretized interpretation of ``scale'']\label{thrm.mainDiscrete}
Assume $u$ and $v$ are  mild solutions  with the same initial data.
\begin{enumerate}
    \item \text{(Uniqueness criteria)} Assume \eqref{A1'} for some value $\cupper$ and suppose $u$ and $v$ are $L^{3,\I}$-weak solutions with the same data $u_0$.\footnote{Note that \eqref{A1'} is consistent with the initial data being $O(|x|^{-1})\in L^{3,\I}$, so this is a reasonable class for solutions to belong in. We provide a definition of $L^{3,\I}$-weak solutions in Section 2.} Fix $3<p\leq  \I$.  There exists $\e_3= \e_3(c_1,p,\|u_0\|_{L^{3,\I}})$ so that, letting
	\[
\bar h(t) =  \max\bigg\{ 2 \frac {c_1-\e_3} {\e_3} \sqrt t, \bigg( {t^{3/4}}  \frac { t^{3/(2p)-1/2} } {\e_3 \|w(t)\|_{L^p}  }   \bigg)^{2/3}\bigg\},
	\] 
 if
    \[
    \|w-J_{\bar h}w \|_{L^p}(t) \leq \e_3 \|w\|_{L^p}(t),
    \]
    across a time interval $(0,\delta)$ where $\delta>0$ is arbitrary, 
    then $w=0$.   
    \item \text{(Large scales are active)} Let $M_0(t)=2\max\{\|u\|_{L^p}(t),\|v\|_{L^p}(t)\}$.
    If $w$  satisfies  \eqref{A2} for some $\clower$ and $p\in (3,\I]$, then 
there exist $\ga(t)$ and $h(t)$ with
\[
\ga   =\frac {c_2} {4M_0 {(4\td c_p M_0^{2p/(3-p)}+t)^{1/2-3/(2p)}}},
\]
and  
\[\frac 1 h \sim \frac  {4M_0 {(4\td c_p M_0^{2p/(3-p)}+t)^{1/2-3/(2p)}}} {c_2}\bigg( \frac {c_2} { 4 M_0^2 C_B (4\td c_p M_0^{2p/(3-p)}+t)^{1/2-3/(2p)}}   \bigg)^{\frac {p} {3-p}},  
\]
so that we have 
    \[
    \|J_h w \|_{L^p}(t) \geq\frac \ga 2 \|w\|_{L^p}(t).
    \]
\end{enumerate}
\end{theorem}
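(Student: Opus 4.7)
Both items should parallel the proofs of Theorem \ref{thrm.mainFrequancy}, with the volume-element averaging $J_h$ replacing the Littlewood-Paley low-pass projector and the cube Poincar\'e-type inequality
\[
\|f - J_h f\|_{L^p} \lesssim h^{\alpha}\, [f]_{\dot W^{\alpha,p}}, \qquad \alpha \in (0,1],
\]
playing the role of the Bernstein inequality. The heat semigroup appearing in the Duhamel formula for $w$ supplies the fractional regularity needed to apply this at each positive time.

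\textbf{Item (2).} I would estimate $\|w - J_h w\|_{L^p}(t)$ directly from the Duhamel expansion
\[
w(t) = -\int_0^t e^{(t-s)\Delta}\mathbb P \nabla \cdot (u\otimes w + w\otimes v)(s)\, ds,
\]
using the fractional smoothing $\|\nabla^\alpha e^{\tau\Delta}\|_{L^p\to L^p}\lesssim \tau^{-\alpha/2}$, the subcritical $L^p$-bilinear estimate, and the local well-posedness control $\|u\|_{L^p}, \|v\|_{L^p}\leq M_0$ on a time interval of length $\sim \td c_p M_0^{2p/(3-p)}$. The resulting bound has the schematic form
\[
\|w - J_h w\|_{L^p}(t)\lesssim h^\alpha\, M_0^2\, C_B\, F(T, t),
\]
with $T = 4\td c_p M_0^{2p/(3-p)}+t$ and $F$ a product of powers of $t$ and $T$. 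Requiring this to be at most $(1-\gamma/2)\|w\|_{L^p}(t)$ and substituting the lower bound $\|w\|_{L^p}(t)\geq c_2 t^{-1/2+3/(2p)}$ from \eqref{A2} reproduces the stated scale $h(t)$; the unusual exponent $p/(3-p)$ arises on inverting the $L^p$-bilinear balance for the critical value of $\alpha$.

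\textbf{Item (1).} The plan is a modified Duhamel uniqueness argument in the $L^{3,\I}$-weak framework. Splitting each bilinear factor $u$ via a spatial cutoff at radius $\bar h$, on the far set $B_{\bar h}^c$ the pointwise bound \eqref{A1'} together with $\bar h \geq 2(c_1-\e_3)/\e_3\sqrt t$ gives $|u|+|v|\lesssim \e_3/\sqrt t$, supplying precisely the smallness needed to absorb the far-field contribution into the critical $L^p$ bilinear estimate. On the near set $B_{\bar h}$, further decompose $w = J_{\bar h}w + (w - J_{\bar h}w)$: the deviation $w - J_{\bar h}w$ contributes $\e_3\|w\|_{L^p}$ by hypothesis, while the piecewise-constant piece $J_{\bar h}w$ is handled through the discrete Bernstein-type estimate $\|J_{\bar h}w\|_{L^\infty}\lesssim \bar h^{-3/p}\|w\|_{L^p}$ combined with the $L^{3,\I}$-integrability of $u, v$ from \eqref{A1'}; the second entry of the $\max$ defining $\bar h(t)$ is calibrated precisely so that the resulting prefactor beats the critical weight $t^{1/2-3/(2p)}$. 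Summing all contributions should produce a closed inequality
\[
\sup_{0<s<\delta}s^{1/2-3/(2p)}\|w\|_{L^p}(s)\leq C(\e_3)\sup_{0<s<\delta}s^{1/2-3/(2p)}\|w\|_{L^p}(s)
\]
with $C(\e_3)\to 0$ as $\e_3\to 0$, forcing $w\equiv 0$ on $(0,\delta)$ for $\e_3$ sufficiently small; forward uniqueness for mild solutions launched from the common value $u(s_0)=v(s_0)$ at any interior $s_0\in(0,\delta)$ then extends $w=0$ to the full time of existence.

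\textbf{Main obstacle.} The delicate step is the near-region estimate in Item (1). Because $J_{\bar h}$ is a sharp averaging rather than a mollifier, it offers no gain in spatial regularity, and the only improvement available is the discrete Bernstein trade of $L^p$ for $L^\infty$ weighted by $\bar h^{-3/p}$. Making this factor interact correctly with the $t^{-1/2}$ weight coming from \eqref{A1'} is what dictates the nonstandard shape of the second entry in the definition of $\bar h$, and the main bookkeeping task will be to verify that the resulting balance produces an absorbable small prefactor uniformly over $(0,\delta)$ and not merely at a single time slice.
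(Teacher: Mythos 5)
Your overall architecture (far/near splitting via \eqref{A1'} for Item (1), a sparseness-type heat decay for Item (2)) is pointed in the right direction, but both items have concrete gaps.

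For Item (1), the step you flag as the ``main obstacle'' is resolved in the paper by an ingredient your plan never invokes: the hypothesis that $u$ and $v$ are $L^{3,\I}$-weak solutions gives, via the dimensionless energy estimate \eqref{ineq.BSSdecay} applied to $w=\td u-\td v$, the decay $\|w\|_{L^2}(t)\leq c_4 t^{1/4}$ with $c_4=c_4(\|u_0\|_{L^{3,\I}})$. The near-region averaged term is then bounded by
\[
\| v\, J_{\bar h}w\|_{L^p(Q_0)} \leq \|v\|_{L^p}\,\bar h^{-3}\Big|\int_{Q_0}w\Big| \lesssim c_1 c_4\, s^{3/(2p)-1/2}\, s^{1/4}\,\bar h^{-3/2},
\]
and it is exactly this $s^{1/4}\bar h^{-3/2}$ balance that produces the second entry of the $\max$ defining $\bar h(t)$ (the $t^{3/4}$ and the $\|w(t)\|_{L^p}$ in the denominator). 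Your proposed substitute, the discrete Bernstein bound $\|J_{\bar h}w\|_{L^\I}\lesssim \bar h^{-3/p}\|w\|_{L^p}$, leads instead to a requirement of the form $\bar h\gtrsim \e_3^{-p/3}\sqrt t$; this is a different (and generally larger) length scale, and since the theorem's hypothesis is imposed at the \emph{specific} $\bar h(t)$ in the statement and $J_{\bar h}$ has no monotonicity in $\bar h$, your argument would prove a variant of the theorem rather than the theorem itself. The fact that your sketch makes no use of the $L^{3,\I}$-weak solution assumption, or of the dependence of $\e_3$ on $\|u_0\|_{L^{3,\I}}$, is the telltale sign of the missing ingredient.

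For Item (2), the paper does not estimate $\|w-J_hw\|_{L^p}$ at all. It argues by contradiction forward in time, exactly as in Part 2 of Theorem \ref{thrm.mainFrequancy}: if $\|J_hw(t_0)\|_{L^p}\leq\frac\ga2\|w(t_0)\|_{L^p}$, then Lemma \ref{lemma.HeatDiscrete} forces $\|e^{(t-t_0)\Delta}w(t_0)\|_{L^p}$ to be small, and the short-time bilinear estimate (valid on $[t_0,t_0+T_0]$ by local well-posedness) pushes $\|w(t)\|_{L^p}$ below the threshold $\clower t^{3/(2p)-1/2}$ at a slightly later time $t$, contradicting \eqref{A2}. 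Your direct approach via a fractional Poincar\'e inequality and the Duhamel formula anchored at time $0$ has a real problem: the time integral $\int_0^t(t-s)^{-1/2-\al/2-3/(2p)}\|u\otimes w+w\otimes v\|_{L^{p/2}}(s)\,ds$ requires control of $\|u\|_{L^p}(s)$ and $\|v\|_{L^p}(s)$ on all of $(0,t)$, whereas the hypotheses only give $M_0$ at the single time $t$ together with forward-in-time control from the local well-posedness theory; nothing prevents these norms from degenerating as $s\to0^+$. Re-anchoring the Duhamel formula at an interior time could perhaps salvage a version of your argument, but as written the scheme does not close, and it is in any case a genuinely different (and heavier) route than the one the paper takes.
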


Note that, unlike in Theorem \ref{thrm.main} and \ref{thrm.mainFrequancy}, the length scale in the first part depends on $\|w(t)\|_{L^p}$.

\subsection*{Discussion of proofs} Each of Theorem \ref{thrm.main}, \ref{thrm.mainFrequancy} and \ref{thrm.mainDiscrete} are proved using the same basic ideas. For the uniqueness criteria, we choose length scales so that the background flows are necessarily small at large scales. For example,  in the algebraic case and under \eqref{A1'}, if $\sqrt t \ll |x|$, then 
\[
|v(x,t)|\ll \sqrt t^{-1}.
\]
This depletes the large scale activity of the error. The 
smallness condition on the relative size of the small-scales in the error depletes the rest of the error. Of course, the nonlinear nature of the problem means that the large scales of the background flows are not only paired with the large scales of the error in terms like $w\cdot \nb u$, so there is some nuance in pushing these ideas through. 

For the secondary conclusions, we employ an idea worked out in \cite{ABr} which notes that the integral structure of the error,
\[
w(x,t)=e^{t\Delta}w_0 -\int_0^t e^{(t-s)\Delta}\mathbb P\nb \cdot (u\otimes u - v\otimes v)\,ds,
\]
has a part which rapidly decays when   small scales are dominant in the initial error and a part which grows from zero at $t=0$. Hence, if sufficiently small  scales are dominant  at some time, the $L^\I$-norm can be pushed below the assumed lower bound \eqref{A2} at some later time, which is contradictory. This idea has been used in prior work on regularity, in particular it relates to a ``sparseness'' technique  of Gruji\'c \cite{Grujic1} which was re-imagined in \cite{ABr}. 
See also \cite{FGL,BFG,Grujic2,GX}.

\subsection*{Predictability}
Forecasting in turbulent media is possible despite the fact that turbulence is a highly chaotic fluid state because information about different scales persists in the flow for different periods of time. This  can be seen in weather forecasting where the turnover time of the smallest eddies in the atmosphere is on the order of seconds, indicating a rapid onset of chaos at small scales, while weather forecasts are effective for days \cite{BoMu3D}.  In other words,  {large scale effects remain predictable for a non-negligible amount of time despite small scale instabilities.}

Predictability  was initially studied by Lorenz \cite{Lorenz} and Leith and Kraichnan \cite{L,LK}. There is also a rich modern literature, an incomplete list being \cite{BoMu2D,BoMu3D,PDS,TBM}. Several definitions of predictability exist and we adopt that  from \cite{BoMu3D}. For two initial data $u_0$ and $v_0$, define the error energy by $E_\Delta(t) = \| u - v\|_{2}^2(t)$ where $u$ and $v$ evolve from $u_0$ and $v_0$ respectively.  
 For initially small perturbations, the flows are said to be \textit{predictable} if $E_\Delta(t)< \frac \ga 2 (\|u\|_2^2 + \|v\|_2^2)(t)$ for some $\ga\in (0,1)$---for uncorrelated flows the left- and right-hand sides are comparable.
Numerical experiments show that, for infinitesimal perturbations of turbulent flows, on average $E_\Delta(t)$ initially grows exponentially according to $E_\Delta(t) = E_{\Delta}(0 )e^{Lt}$, where $L$ is a Lyapunov exponent, and then settles into a linear growth rate $E_\Delta(t) \sim   t $ \cite{BoMu3D}. 
Linear bounds on growth rates should be expected when perturbing around sufficiently bounded flows. This is even the case when perturbing around an Euler flow, as examined in the context of boundary layer separation by Vasseur and Yang \cite{VY1,VY2}.
The length scale at which the perturbation is given plays a role in the dynamics. A careful description of this can be found in \cite{PDS}.

We include two results which connect predictability to the themes explored in this paper. The first can be viewed as a \textit{conditional predictability criteria} which also sheds light on the distribution of the energy below and above scales in the dissipative range as flows de-correlate.

\begin{theorem}[Predictability criteria]\label{thrm.conditionalPredCrit}
Suppose $u$ and $v$ are distributional solutions to \eqref{eq.ns} on $\R^d\times (0,T)$ for $d=2,3$. 
Suppose also that $w=u-v\in L^\I(0,T;L^2) \cap L^2(0,T;H^1)$ satisfies the energy inequality
\[
\partial_t \|w\|_{L^2}^2 + 2\| \nb w\|_{L^2}^2 \leq  -2\int (w\cdot \nb u)\cdot w \,dx.
\] 
There exists a universal constant $c_5$ so that, if, at a given time there exists $J\in \Z$ so that
\[
\frac{ \|\Delta_{\leq  J} w\|_{L^2}^2 } {\| \Delta_{>J}w \|_{L^2}^2 }\leq    \frac {c_5 2^{2J}}  { \min \{\|u\|_{L^\I} , \sqrt{\|\nb u\|_{L^\I}}\}^2} -1 ,
\]
and
\[
 2^{2J} \geq \frac {\min \{\|u\|_{L^\I} , \sqrt{\|\nb u\|_{L^\I}}\}^2} {c_5},
\]
then 
\EQ{\label{condPredConcl}
\partial_t \| w\|_{L^2}^2  + \frac {C \min \{\|u\|_{L^\I} , \sqrt{\|\nb u\|_{L^\I}}\}^2} {2c_5} \|w\|_{L^2}^2   < 0,
}
meaning the flows are \textit{not} de-correlating at the given time. Similarly, \eqref{condPredConcl} also holds if there exists $h>0$ so that
\[
\frac{ \|J_h w\|_{L^2}^2 } {\|  w-J_hw  \|_{L^2}^2 }\leq   \frac {c_5 h^{-2}}  { \min \{\|u\|_{L^\I} , \sqrt{\|\nb u\|_{L^\I}}\}^2} -1 ,
\]
and
\[
 h^{-2 }\geq \frac {\min \{\|u\|_{L^\I} , \sqrt{\|\nb u\|_{L^\I}}\}^2} {c_5}.
\]

\end{theorem}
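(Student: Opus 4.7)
My plan is to interpret the energy inequality as a differential inequality for $\|w\|_{L^2}^2$, bound the drift term $-2\int(w\cdot\nb u)\cdot w\,dx$ by a multiple of $A\|w\|_{L^2}^2$ with $A:=\min\{\|u\|_{L^\I},\sqrt{\|\nb u\|_{L^\I}}\}^2$, and then use the scale-distribution hypothesis to extract from the dissipation term $2\|\nb w\|_{L^2}^2$ a lower bound of the form $(\text{const}/c_5)\,A\|w\|_{L^2}^2$. Taking $c_5$ small enough then makes the dissipation dominate the drift by the margin required in \eqref{condPredConcl}.

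The drift term is bounded in two standard ways: directly by $2\|\nb u\|_{L^\I}\|w\|_{L^2}^2$, and, using $\nb\cdot w=0$ together with an integration by parts, by $2\|u\|_{L^\I}\|w\|_{L^2}\|\nb w\|_{L^2}$, the latter absorbed via Young's inequality into a small fraction of $\|\nb w\|_{L^2}^2$ plus $\|u\|_{L^\I}^2\|w\|_{L^2}^2$. Taking the better of the two and carrying the absorbed dissipation back to the left yields, for a universal $C_1$,
\[
\partial_t\|w\|_{L^2}^2+\|\nb w\|_{L^2}^2\leq C_1 A\|w\|_{L^2}^2.
\]
For the dissipation lower bound, I would decompose $\|w\|_{L^2}^2=\|\Delta_{\leq J}w\|_{L^2}^2+\|\Delta_{>J}w\|_{L^2}^2$ and insert the hypothesis to get $\|w\|_{L^2}^2\leq (c_5\,2^{2J}/A)\|\Delta_{>J}w\|_{L^2}^2$; Bernstein's inequality $\|\nb\Delta_{>J}w\|_{L^2}^2\gtrsim 2^{2J}\|\Delta_{>J}w\|_{L^2}^2$ then gives $\|\nb w\|_{L^2}^2\gtrsim (A/c_5)\|w\|_{L^2}^2$. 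The ancillary assumption $2^{2J}\geq A/c_5$ merely ensures the coefficient $c_5\,2^{2J}/A\geq 1$, as consistency with $\|\Delta_{>J}w\|_{L^2}^2\leq \|w\|_{L^2}^2$ demands. Choosing $c_5$ small in terms of $C_1$ and the Bernstein constant produces \eqref{condPredConcl}.

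The second statement, using the lattice projector $J_h$, is proved by the same template with Bernstein replaced by Poincar\'e: since $w-J_hw$ has zero mean on each cube $Q_i$, we have $\|w-J_hw\|_{L^2(Q_i)}^2\lesssim h^2\|\nb w\|_{L^2(Q_i)}^2$, whence summing over $i$ yields $\|\nb w\|_{L^2}^2\gtrsim h^{-2}\|w-J_hw\|_{L^2}^2$; the rest of the argument transcribes verbatim under the substitution $2^{2J}\leftrightarrow h^{-2}$ and $\Delta_{>J}w\leftrightarrow w-J_hw$. I do not anticipate a genuine obstacle; the only delicate step is a careful choice of the universal threshold for $c_5$ so that the Bernstein (or Poincar\'e) gain on the dissipation genuinely exceeds the drift and produces the explicit factor $C/(2c_5)$ in the stated conclusion, which is bookkeeping of universal constants rather than a conceptual hurdle.
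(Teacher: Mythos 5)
Your argument is correct and follows essentially the same route as the paper's proof: bound the drift term either directly by $\|\nb u\|_{L^\I}\|w\|_{L^2}^2$ or, after integrating by parts, by $\|u\|_{L^\I}\|w\|_{L^2}\|\nb w\|_{L^2}$ with Young absorption into half the dissipation; then use Bernstein (resp.\ cube-wise Poincar\'e for $J_h$) together with the scale-distribution hypothesis to lower-bound the remaining dissipation by $(\mathrm{const}/c_5)\min\{\|u\|_{L^\I},\sqrt{\|\nb u\|_{L^\I}}\}^2\|w\|_{L^2}^2$, and close by choosing $c_5$ small. The paper merely packages the same bookkeeping through an auxiliary parameter $K$ with $K+1\sim c_5 2^{2J}/\min\{\cdot\}^2$, so no substantive difference remains.
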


Note that the preceding condition is stated at a single time. If the condition holds across the  time interval  $(0,T)$ then $\|w\|_2^2$ is exponentially decaying.  Although the condition is formulated at individual frequencies, the rate of exponential decay is independent of $J$ and $h$. 

We explicitly assume that $w$ has an energy inequality so that we do not need to impose additional conditions on $u$ or $v$. Plainly if $u$ and $v$ are both singular at a particular time, then the conditions of the theorem cannot be met. The problem is symmetric  in $u$ and $v$ so $\min \{\|u\|_{L^\I} , \sqrt{\|\nb u\|_{L^\I}}\}$ can be replaced by $\min \{\|v\|_{L^\I} , \sqrt{\|\nb v\|_{L^\I}}\}$.

Henshaw, Kreiss and Reyna identify a factor of $\|\nb u\|_\I^{-1/2}$ with the dissipative length scale in turbulence \cite{Henshaw}. From this perspective, our result is describing behavior of the error in the dissipative range. If the initial error occurs at very small scales, i.e.~deep within the dissipative range, then due to continuity we must have that the conditions in the theorem are satisfied for a non-vanishing period of time at scales between the perturbation scale and the inertial range. During this time, the error energy would decrease exponentially at a rate which is independent of the scale of the initial perturbation. This would cease once activity builds up at larger scales, at which point the flows would presumably begin to  de-correlate, filling scales in the inertial range in an ``inverse cascade,'' as simulated in \cite{BoMu3D}.   

There are similarities between Theorem \ref{thrm.conditionalPredCrit} and results in data assimilation and determining functionals \cite{AOT,FT,FTiti,JonesTiti,JonesTiti2}, a fact which is visible in our proof.

\bigskip 

Our second result related to predictability explores the universality of the linear separation rate simulated in \cite{BoMu3D} within the class of Leray-Hopf weak solutions. The linear separation rates are for perturbations around bounded flows, but the Leray-Hopf class includes solutions which are unbounded at $t=0$. This would be the case for the localized non-unique solutions \textit{hypothesized} to exist in \cite{JiaSverakIll,GuSv}, where we are viewing non-unique solutions as perturbations with initial perturbation zero.

The hypothetical non-unique Leray-Hopf solutions of Jia and \v Sver\' ak  are built by perturbing\footnote{We are using the term ``perturbing'' a lot. It presently does not refer to the perturbations in the concept of predictability as in the preceding paragraph, but rather to the method by which Jia and \v Sver\' ak generate Leray-Hopf weak solutions from self-similar solutions.} two (hypothetical) self-similar solutions to finite energy solutions. This involves cutting off the tail of the initial data. This should, in principle, not greatly effect the dynamics near the space-time origin, which is where the singularity occurs. In that case the solutions should have an error energy separation which saturates the $t^{1/2}$ rate determined by scaling, as is necessarily the case for the self-similar solutions due to their exact scaling property. This intuition can be made rigorous and suggests that a linear error energy separation rate is likely \textit{not} universal within the Leray-Hopf class.  The following proposition re-states  results from \cite{JiaSverakIll} with the addition of   a new lower bound on the error energy at small times.

\begin{proposition}\label{prop.JS}
Suppose there exists a $(-1)$-homogeneous, divergence free vector field $u_0$ and a self-similar local energy solution $u_1$ satisfying \cite[Spectral Condition (B)]{JiaSverakIll}. Then, there exists a second self-similar local energy solution $u_2$  with the same initial data $u_0$ so that $u_1\neq u_2$. Furthermore, there exist $\td u_0$, $\td u_1$ and $\td u_2$ so that   $u_1+\td u_1$ and $u_2 +\td u_2$  are non-unique Leray-Hopf weak solutions with divergence free initial datum $\td u_0\in L^2$ and, for small enough $t$,
\[
t^{1/2}\lesssim \| (u_1 +\td u_1)-(u_2 +\td u_2)\|_{L^2}^2(t).
\] 
\end{proposition}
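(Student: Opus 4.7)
\textit{Proof sketch.} The existence of a second self-similar local energy solution $u_2 \neq u_1$ sharing the same $(-1)$-homogeneous data under Spectral Condition (B), and the construction of non-unique Leray-Hopf weak solutions $u_1+\tilde u_1$ and $u_2+\tilde u_2$ sharing a common divergence-free datum $\tilde u_0\in L^2$, are exactly the contents of \cite{JiaSverakIll}; I would simply quote these. The plan therefore focuses on the only genuinely new piece, namely the lower bound $t^{1/2} \lesssim \|(u_1+\tilde u_1)-(u_2+\tilde u_2)\|_{L^2}^2(t)$ at small times.

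Split the full error as $w+z$, where $w:= u_1-u_2$ and $z:=\tilde u_1-\tilde u_2$. By construction both $\tilde u_i$ have initial value $\tilde u_0-u_0$, so $z(0)=0$. The three steps of the plan are: (i) show $\|w\|_{L^2}^2(t) = c_0\, t^{1/2}$ exactly for some $c_0>0$; (ii) show $\|z\|_{L^2}(t) = o(t^{1/4})$ as $t\to 0^+$; (iii) conclude by a reverse triangle inequality.

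For step (i), self-similarity gives $w(x,t)=t^{-1/2}(U_1-U_2)(x/\sqrt t)$. Since $u_1$ and $u_2$ share the same $(-1)$-homogeneous data, the profiles agree to leading order at infinity, and the pointwise bound $|w(x,t)|\leq C t^{3/2}/(|x|+\sqrt t)^4$ from \cite{BP1}, specialized to $t=1$, yields $|U_1-U_2|(y)\lesssim (1+|y|)^{-4}$, which is square integrable on $\R^3$. A change of variables $y=x/\sqrt t$ then produces $\|w\|_{L^2}^2(t) = t^{1/2}\|U_1-U_2\|_{L^2}^2 =: c_0\, t^{1/2}$, with $c_0>0$ because $u_1\neq u_2$. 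For step (ii), $z$ satisfies a perturbed Navier-Stokes system about the background $u_2+\tilde u_2$, with zero initial data and forcing that is bilinear in $w$ and the Jia-\v Sver\'ak corrections $\tilde u_i$; using the pointwise decay of $w$, the $L^2$ continuity of $\tilde u_i$ at $t=0$ supplied by the construction, and a Duhamel/Gr\"onwall energy argument, one would extract $\|z\|_{L^2}(t) = o(t^{1/4})$. Combining, for $t$ small,
\[
\|w+z\|_{L^2}(t)\geq \|w\|_{L^2}(t)-\|z\|_{L^2}(t)\geq \tfrac12\sqrt{c_0}\, t^{1/4},
\]
and squaring yields the claimed lower bound.

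The main obstacle is step (ii). The backgrounds $u_i$ are singular at the space-time origin and only have $(-1)$-homogeneous decay at infinity, so standard energy/uniqueness machinery for $z$ does not apply directly; the argument must extract from the Jia-\v Sver\'ak construction that $\tilde u_1$ and $\tilde u_2$, built from the same truncated data around nearly identical backgrounds at infinity, remain $L^2$-close at a rate strictly better than the $t^{1/4}$ scaling of $w$. The delicate point is that this cannot follow from a crude bound like $\|\tilde u_i\|_{L^2}\lesssim 1$ alone (which only gives $O(1)$ for $z$), but requires quantitative stability of the fixed-point iteration used to produce $\tilde u_i$ with respect to the self-similar background.
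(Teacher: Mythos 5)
Your overall architecture (reverse triangle inequality; a scaling lower bound for the self-similar difference $w=u_1-u_2$; an $o(t^{1/4})$ bound for the correction part) is the right one, and your step (i) is essentially what the paper does, except that the paper works on $L^2(B(0,1))$ rather than globally: by exact self-similarity, $\|u_1-u_2\|_{L^2(B(0,\sqrt t))}^2(t)=t^{1/2}\|u_1-u_2\|_{L^2(B(0,1))}^2(1)$, so a lower bound on the unit ball follows without ever invoking global square-integrability of the profile. The problem is step (ii), which you correctly identify as the crux and then leave unresolved; moreover, the route you sketch for it is not the one that closes. You propose to write an equation for $z=\tilde u_1-\tilde u_2$ around a singular background and appeal to "quantitative stability of the fixed-point iteration." That requires comparing the two corrections to each other across two different singular backgrounds, which is exactly the kind of critical-drift estimate that does not close by Gr\"onwall; it is also complicated by the fact that each $\tilde u_i$ individually need not be globally $L^2$ (its initial value $\tilde u_0-u_0$ behaves like $|x|^{-1}$ at infinity), so even setting up a global $L^2$ estimate for $z$ is delicate.

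The paper avoids all of this by never comparing $\tilde u_1$ to $\tilde u_2$: it bounds each correction separately, proving $\|\tilde u_i\|_{L^2(B(0,1))}^2(t)\leq \epsilon\, t^{1/2}$ for $t\leq T_\epsilon$, which suffices for the reverse triangle inequality on $B(0,1)$ once $\epsilon$ is chosen small relative to $\|u_1-u_2\|_{L^2(B(0,1))}(1)$. The mechanism is: (a) the initial datum of $\tilde u_i$ is supported off $B(0,2)$ (it cuts off the tail of $u_0$), so the initial term in the local energy inequality with a cutoff $\phi$ supported in $B(0,2)$ vanishes; (b) the quantitative decay estimates from the Jia--\v Sver\'ak construction, $t^{1/2}\|\nabla\tilde u_i\|_{L^4}\to 0$ and $L^4$-continuity of $\tilde u_i$ at $t=0$, control the pressure and cutoff terms by $\epsilon t^{1/2}$; and (c) the critical drift term $\int_0^t\int(\tilde u_i\cdot\nabla\tilde u_i)\,u_i\,\phi$ is handled using the a priori local energy bounds for the self-similar background together with $\int_0^t\|\tilde u_i\|_{L^4}\|\nabla\tilde u_i\|_{L^4}\,ds\lesssim \epsilon\,t^{1/2}$. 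If you want to complete your write-up, replace your step (ii) with these individual local-energy bounds; as written, the essential estimate is asserted rather than proved.
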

Consequently, if these solutions exist, then there is no universal linear bound on the separation rates of perturbations within the Leray-Hopf class.

\subsection*{Organization} Section 2 contains preliminaries including definitions and discussions of mild solutions, local Leray solutions, $L^{3,\I}$-weak solutions and the Littlewood-Paley decomposition. Sections 3, 4 and 5 contain, respectively, the proofs of Theorems \ref{thrm.main}, \ref{thrm.mainFrequancy} and \ref{thrm.mainDiscrete}.  The proofs of results on predictability are respectively contained in Sections 6 and 7.

\section{Preliminaries}\label{sec.prelim}

\subsection{Mild solutions} 
We denoted by $B(\cdot , \cdot)$ the bilinear operator
\[
B(u,v)=- \frac 1 2 \int_0^\tau \underbrace{e^{(\tau-s)\Delta}\mathbb P}_{\text{Oseen tensor}} \nb \cdot ( u\otimes v + v\otimes u  )(s)\,ds,
\]
where $u$ and $v$ are vectors and $\mathbb P$ is the Leray projection operator.
Dumahel's formula applied to the projected form of \eqref{eq.ns} formally leads to the integral representation
\[
u(x,t+\tau)=  e^{\tau\Delta}[u(t)](x)-  \int_0^\tau  {e^{(\tau-s)\Delta}\mathbb P}  \nb \cdot ( u\otimes u )(t+s)\,ds=e^{\tau\Delta}[u(t)](x) +B(u(t+\cdot, u(t+\cdot).
\]
Convergence of the solution to the data is understood in the sense of distributions. 
Although primarily used in the context of strong solutions (in the sense of \cite{Kato}), this formula can be justified rigorously under very general conditions---in particular, it is valid distributionally for many classes of weak solutions \cite{LR,BT7}.  The kernel $K$ of the Oseen tensor
satisfies the following pointwise estimates due to Solonikov \cite{VAS} where $\alpha$ is a multi-index,
\EQ{
\label{ineq.solonikov}
| D^\al_x K(x,t) |\leq C(\alpha) \frac 1 {(|x|+\sqrt t)^{3+|\al| }}.
}
Throughout this paper we will use $C_B$ to denote a universal constant coming from bilinear and kernel estimates. As we will only use the above for $|\alpha|=0,1$, we replace $C(\al)$ by $C_B$ in what follows.  
For $1\leq p\leq \I$,
\EQ{
\label{ineq.bilinear} 
\| B(u,v)\|_{L^p}(t+\tau) \leq \cmild \int_0^\tau \frac 1 {(\tau-s)^{\frac 1 2 +\frac 3 2 (\frac 1 q -\frac 1 p)}} \| u\otimes v (t+s)\|_{L^q}\,ds,
}
where the value of $\cmild$ has been updated. This is from \cite{FJR,Kato} when $1\leq p<\I$ and  \cite[(2.7) and estimates after (3.1)]{GIM} for $p=\I$.
The mild formulation of the perturbed Navier-Stokes equations, where $v$ is the background term and $w$ is the unknown, is
\EQ{ \label{def.wMILD}
w(t+\tau)= e^{\tau \Delta} w(t) - \int_0^\tau e^{(\tau-s)\Delta}\mathbb P\nb \cdot (v\otimes w +w\otimes v+w\otimes w)(t+s)\,ds.
}

\subsection{Local energy solutions}   
We  need to use the properties of local energy solutions in the proof of Proposition \ref{prop.JS}.  These solutions were introduced by Lemari\'e-Rieusset, see the treatments in  \cite{LR,LR2}, and played an important role in the proof of local smoothing in \cite{JS}. 
Because $L^{3,\I}\subset L^2_\uloc$\footnote{$L^2_\uloc$ is the set of all uniformly locally square integrable functions and $L^{3,\I}$ is weak-$L^3$.} it is a natural class in which to consider non-uniqueness \cite{JS,JiaSverakIll,GuSv}. Additional properties of this class have been explored in \cite{KS,KMT,BT8,BT7}.

\begin{definition}[Local energy solutions]\label{def:localEnergy} A vector field $u\in L^2_{\loc}(\R^3\times [0,T))$, $0<T\leq \I$, is a local energy solution to \eqref{eq.ns} with divergence free initial data $u_0\in L^2_{\uloc}(\R^3)$, denoted as $u \in \cN(u_0)$, if:
\begin{enumerate}
\item for some $p\in L^{\frac{3}{2}}_{\loc}(\R^3\times [0,T))$, the pair $(u,p)$ is a distributional solution to \eqref{eq.ns},
\item for any $R>0$, $u$ satisfies
\begin{equation}\notag
\esssup_{0\leq t<R^2\wedge T}\,\sup_{x_0\in \R^3}\, \int_{B_R(x_0 )}\frac 1 2 |u(x,t)|^2\,dx + \sup_{x_0\in \R^3}\int_0^{R^2\wedge T}\int_{B_R(x_0)} |\nb u(x,t)|^2\,dx \,dt<\I,\end{equation}
\item for any $R>0$, $x_0\in \R^3$, and $0<T'< T $, there exists a function of time $c_{x_0,R}\in L^{\frac{3}{2}}_{T'}$  so that, for every $0<t<T'$  and $x \in B_{2R}(x_0)$  
\EQ{ \label{eq:pressure.dec}
p(x,t)&= c_{x_0,R}(t)-\De^{-1}\div \div [(u\otimes u )\chi_{4R} (x-x_0)]
\\&\quad - \int_{\R^3} (K(x-y) - K(x_0 -y)) (u\otimes u)(y,t)(1-\chi_{4R}(y-x_0))\,dy 
,
}
in $L^{\frac{3}{2}}(B_{2R}(x_0)\times (0,T'))$
where  $K(x)$ is the kernel of $\De^{-1}\div \div$,
 $K_{ij}(x) = \pd_i \pd_j \frac {-1}{4\pi|x|}$, and $\chi_{4R} (x)$ is the characteristic function for $B_{4R}$. 
\item for all compact subsets $K$ of $\R^3$,  $u(t)\to u_0$ in $L^2(K)$ as $t\to 0^+$,
\item $u$ is suitable, i.e., for all cylinders $Q\Subset Q_T$ and all non-negative $\phi\in C_c^\I (Q)$, we have  the \emph{local energy inequality}
\EQ{\label{ineq:CKN-LEI}
2\iint |\nb u|^2\phi\,dx\,dt \leq \iint |u|^2(\pd_t \phi + \De\phi )\,dx\,dt +\iint (|u|^2+2p)(u\cdot \nb\phi)\,dx\,dt,
}
\item the function
\EQ{\label{cont}
t\mapsto \int_{\R^3} u(x,t)\cdot {w(x)}\,dx,
}
is continuous in $t\in [0,T)$, for any compactly supported $w\in L^2(\R^3)$.
\end{enumerate}
\end{definition}

Local energy solutions are known to satisfy certain \textit{a priori} bounds \cite{LR}. For example, in \cite{JS,BT8}, the following \textit{a priori} bound is proven: 
Let $u_0\in L^2_\uloc$, $\div u_0=0$, and assume $u\in \mathcal N (u_0)$.  For all $r>0$ we have
\begin{equation}\label{ineq.apriorilocal}
\esssup_{0\leq t \leq \sigma r^2}\sup_{x_0\in \R^3} \int_{B_r(x_0)}\frac {|u|^2} 2 \,dx\,dt + \sup_{x_0\in \R^3}\int_0^{\sigma r^2}\int_{B_r(x_0)} |\nabla u|^2\,dx\,dt <CA_0(r) ,
\end{equation}
where
\[
A_0(r)=rN^0_r= \sup_{x_0\in \R^3} \int_{B_r(x_0)} |u_0|^2 \,dx,
\] 
and
\begin{equation}\label{def.sigma}
\si=\sigma(r) =c_0\, \min\big\{(N^0_r)^{-2} , 1  \big\},
\end{equation}
for a small universal constant $c_0>0$.  Additionally, local energy solutions are mild \cite{BT7}.

\subsection{$L^{3,\I}$-weak solutions}
Local energy solutions are defined for initial data in $L^2_\uloc$. Note that $L^{3,\I}$ embeds in $L^2_\uloc$. Thus, when $u_0\in L^{3,\I}$, a local energy solution exists. The scaling of $L^{3,\I}$ does not, however, show up in the properties of this solution which come from the definition of local energy solutions. 
The class of  $L^{3,\I}$-weak solutions provides a notion of solution which is more tailored to the scaling of $L^{3,\I}$. This class was introduced by Barker, Seregin and \v Sver\' ak in \cite{BaSeSv} and extends ideas in \cite{SeSv}. It has since been extended to non-endpoint critical Besov spaces of negative smoothness \cite{AB}. 

\begin{definition}[Weak $L^{3,\I}$-solutions] Let $T>0$ be finite. Assume $u_0\in L^{3,\I}$ is divergence free. We say that $u$ and an associated pressure $p$ comprise a weak $L^{3,\I}$-solution if \begin{enumerate}
 \item $(u,p)$ satisfies \eqref{eq.ns} distributionally,  \eqref{cont} and   the local energy inequality \eqref{ineq:CKN-LEI},
 \item $\td u :=u-e^{t\Delta}u_0$ satisfies, for all $t\in (0,T)$,
 \EQ{\label{ineq.BSSbound}
 \sup_{0<s<t}\| \td u \|^2_{L^2} (s) + \int_0^t \| \nb \td u\|_{L^2}^2(s)\,ds   <\I,
 }
 and
\EQ{ \label{ineq:energyIneq}
 \| \td u\|_{L^2}^2(t) +2\int_0^t \int|\nb \td u|^2\,dx\,ds\leq 2\int_0^t \int ( e^{s\Delta}u_0 \otimes \td u + e^{s\Delta}u_0 \otimes e^{s\Delta}u_0) : \nb \td u\,dx\,ds.
 }
\end{enumerate}
\end{definition}
In \cite{BaSeSv}, weak solutions are constructed which satisfy the above definition for all $T>0$. 
 Also, due to their spatial decay, weak $L^{3,\I}$-solutions are mild and, in view of \cite{BT7}, are local energy solutions.

An important observation in \cite{BaSeSv} is that the nonlinear part of a weak $L^{3,\I}$-solution satisfies a dimensionless energy estimate, namely
\EQ{\label{ineq.BSSdecay}
 \sup_{0<s<t}\| \td u \|_{L^2} (s) +\bigg(\int_0^t \| \nb \td u\|_{L^2}^2(s)\,ds \bigg)^\frac 1 2 \lesssim_{u_0} t^{\frac 1 4}.
}
We emphasize that the energy associated with $\td u$ vanishes at $t=0$. This decay property will be essential in our work.

\subsection{Littlewood-Paley} We refer the reader to \cite{BCD} for an in-depth treatment of Littlewood-Paley and Besov spaces. Let $\lambda_j=2^j$ be an inverse length and let $B_r$ denote the ball of radius $r$ centered at the origin.  Fix a non-negative, radial cut-off function $\chi\in C_0^\infty(B_{1})$ so that $\chi(\xi)=1$ for all $\xi\in B_{1/2}$. Let $\phi(\xi)=\chi(\lambda_1^{-1}\xi)-\chi(\xi)$ and $\phi_j(\xi)=\phi(\lambda_j^{-1})(\xi)$.  Suppose that $u$ is a vector field of tempered distributions and let $\Delta_j u=\mathcal F^{-1}\phi_j*u$ for $j\geq 0$ and $\Delta_{-1}=\mathcal F^{-1}\chi*u$. Then, $u$ can be written as\[u=\sum_{j\geq -1}\Delta_j u.\]
If $\mathcal F^{-1}\phi_j*u\to 0$ as $j\to -\infty$ in the space of tempered distributions, then we define $\dot \Delta_j u = \mathcal F^{-1}\phi_j*u$ and have
\[u=\sum_{j\in \Z}\dot \Delta_j u.\]
We additionally define
\[
\Delta_{<J} f = \sum_{j<J} \dot \Delta _jf;\quad \Delta_{\geq j} f =f- \Delta_{<J} f,
\]
with the obvious modifications for $\Delta_{\leq J}$ and $\Delta_{>J}$. If we do not specify that $J$ is in integer, then we use $\chi(\la_1^{-1} 2^{J} \xi)$ in the definition of $\Delta_{\leq J}$.

Littlewood-Paley blocks interact nicely with derivatives and, by Young's inequality, $L^p$ norms. This is   illustrated by the Bernstein inequalities which read:
\[
\| D^\al \dot \Delta _jf \|_{L^p} \leq   2^{j|\al| } \|\dot \Delta _jf\|_{L^p}; \quad   \|   \dot \Delta _jf \|_{L^p} \leq   2^{j (\frac 3 q - \frac 3 p) } \|\dot \Delta _jf \|_{L^q}. 
\]

The Littlewood-Paley formalism is commonly used to define Besov spaces. 
We are primarily interested in Besov spaces with infinite summability index, the norms of which are
\begin{align*}
&||u||_{B^s_{p,\infty}}:= \sup_{-1\leq j<\infty } \lambda_j^s ||\Delta_j u ||_{L^p(\R^n)},
\end{align*}
and
\begin{align*}
&||u||_{\dot B^s_{p,\infty}}:= \sup_{-\infty< j<\infty } \lambda_j^s ||\dot \Delta_j u ||_{L^p(\R^n)}.
\end{align*} 
The critical scale of Besov spaces are $\Bp$. Note that $L^3\subset L^{3,\I} \subset \Bp$ for $3<p$. In particular, $\Bp$ contains functions $f$ satisfying $|f(x)|\lesssim |x|^{-1}$ when $p>3$.

\section{Algebraic  scenario} 

The following definition and lemma appear in \cite{AB}. They will be used to prove Item (2) of Theorem \ref{thrm.main}. 
\begin{definition}[$L^p$-sparseness]
\label{def:lpsparseness}
Let $1 \leq p \leq \infty$, $ \varepsilon,  \be\in (0,1)$, and $ \ell>0$. A vector field $u_0 \in L^p(\R^d)$ is \emph{$(\varepsilon,\be,\ell)$-sparse in $L^p$} if there exists a measurable set $S$ such that
\begin{equation}
    \|u_0\|_{L^p (S^c)} <    \beta  \|u_0\|_{L^p}
\end{equation}
and 
\begin{equation}
\sup_{x_0\in \R^d} \frac {|S\cap B_{  \ell }(x_0) |} {|B_{ \ell}(x_0) |} \leq \varepsilon.    
\end{equation}
\end{definition} 

Let $G : \R^d \to \R$ be a Schwartz function and $G_t$ be the convolution operator
\begin{equation}
    G_t u_0 := t^{-\frac{d}{2}} G(\cdot/\sqrt{t}) \ast u_0.
\end{equation}
when $t > 0$. We have in mind that $G = (4\pi)^{-d/2} e^{-|x|^2/4}$ and $G_t$ is the heat semigroup.

\begin{lemma}\label{lemma.heat}
 Let $p \in (1,\infty]$, $\gamma \in (0,1)$, and $t>0$ be fixed. Let $u_0\in L^p(\R^d)$ be a vector field. Suppose that $u_0$ is $(\varepsilon,\be,\bar \ell \sqrt {t})$-sparse, where the dimensionless parameters $\varepsilon, \be \in (0,1)$ and $\bar\ell > 0$ satisfy
 \begin{equation}
    \label{eq:therequirementsforsparsitytouselater}
    \bar \ell \geq f(\gamma);\quad  {\beta} \leq \| G \|_{L^1}^{-1} \gamma / 3;\quad  {\varepsilon^{1-\frac{1}{p}}} \leq C_0^{-1} \| G \|_{L^\infty}^{-1} \gamma / \bar{\ell}^d
\end{equation}
where $f$ depends on $G$ and satisfies $f(\gamma) \to +\infty$ as $\gamma \to 0^+$, and
$C_0 > 1$ is an absolute constant depending only on the dimension.  Then \begin{equation}\label{ineq.caloric.decay}
\|  G_t u_0\|_{L^p} \leq \gamma \|u_0\|_{L^p}.    
\end{equation}
When $G_t = e^{t \Delta}$, the above requirement on $\bar{\ell}$ can be made more explicit:
 \begin{equation}
    \label{eq:therequirementsforlesparsity}
    \bar \ell^2 \geq C_0 {\ln(C_0/\gamma)}.
\end{equation}
\end{lemma}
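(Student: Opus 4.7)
The plan is to split $u_0 = u_0\chi_S + u_0\chi_{S^c}$, where $S$ is the sparse set from the definition, and bound $G_t u_0$ as a sum of three pieces, each controlled by one of the three hypotheses in \eqref{eq:therequirementsforsparsitytouselater}.

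For the $S^c$ piece, Young's inequality gives $\|G_t (u_0\chi_{S^c})\|_{L^p}\leq \|G_t\|_{L^1}\|u_0\chi_{S^c}\|_{L^p}=\|G\|_{L^1}\|u_0\|_{L^p(S^c)}\leq \|G\|_{L^1}\beta\|u_0\|_{L^p}$, which is at most $\gamma/3\,\|u_0\|_{L^p}$ by the $\beta$ condition. For the $S$ piece, I would further split the convolution kernel by writing $G_t = G_t^{\near}+G_t^{\far}$, where $G_t^{\near}=G_t\chi_{B_{\bar\ell\sqrt t}}$ and $G_t^{\far}=G_t-G_t^{\near}$. The far part is again handled by Young: $\|G_t^{\far}(u_0\chi_S)\|_{L^p}\leq \|G_t^{\far}\|_{L^1}\|u_0\|_{L^p}=(\int_{|z|>\bar\ell}|G(z)|\,dz)\,\|u_0\|_{L^p}$, and since $G$ is Schwartz this tail integral can be made $\leq \gamma/3$ by taking $\bar\ell\geq f(\gamma)$ for a suitable $f(\gamma)\to\infty$; in the Gaussian case $G(z)=(4\pi)^{-d/2}e^{-|z|^2/4}$ a straightforward tail estimate gives $f(\gamma)^2\sim C_0\ln(C_0/\gamma)$, accounting for the explicit bound on $\bar\ell$ at the end of the lemma.

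The near part is where local sparseness enters. Using the crude bound $|G_t(x-y)|\leq t^{-d/2}\|G\|_{L^\infty}$ on $|x-y|\leq\bar\ell\sqrt t$ and Hölder's inequality on the intersection with $S$, I obtain pointwise
\[
|G_t^{\near}(u_0\chi_S)(x)|\leq \|G\|_{L^\infty} t^{-d/2}|S\cap B_{\bar\ell\sqrt t}(x)|^{1/p'}\|u_0\|_{L^p(B_{\bar\ell\sqrt t}(x))}\leq \|G\|_{L^\infty}\varepsilon^{1/p'}|B_1|^{1/p'}\bar\ell^{d/p'}t^{-d/(2p)}\|u_0\|_{L^p(B_{\bar\ell\sqrt t}(x))}.
\]
Taking $L^p$ norms in $x$ and applying Fubini to $\int \|u_0\|^p_{L^p(B_{\bar\ell\sqrt t}(x))}\,dx=|B_1|(\bar\ell\sqrt t)^d\|u_0\|_{L^p}^p$ absorbs the remaining $t$-factor and gives the clean bound $\|G_t^{\near}(u_0\chi_S)\|_{L^p}\leq \|G\|_{L^\infty}|B_1|\varepsilon^{1-1/p}\bar\ell^d\|u_0\|_{L^p}$. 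The hypothesis $\varepsilon^{1-1/p}\leq C_0^{-1}\|G\|_{L^\infty}^{-1}\gamma/\bar\ell^d$ then forces this to be $\leq \gamma/3\,\|u_0\|_{L^p}$ provided $C_0\geq 3|B_1|$ (this fixes the absolute constant $C_0$). Adding the three contributions yields \eqref{ineq.caloric.decay}.

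The only subtle point is the bookkeeping of the constants---in particular, that the dimension-dependent $|B_1|$ and the $\|G\|_{L^\infty}$, $\|G\|_{L^1}$ factors get absorbed into a single absolute constant $C_0$ while retaining the correct dependence on the free parameters $\gamma,\varepsilon,\bar\ell,\beta$. The Schwartz decay of $G$ is used only to produce the function $f(\gamma)$, and the explicit logarithmic bound for the heat kernel follows immediately from the Gaussian tail estimate $\int_{|z|>\bar\ell}e^{-|z|^2/4}dz\lesssim \bar\ell^{d-1}e^{-\bar\ell^2/4}$.
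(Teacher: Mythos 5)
Your proof is correct. The paper does not actually prove this lemma---it is imported from the cited reference---but your argument (splitting $u_0$ over $S$ and $S^c$, handling the $S^c$ piece and the far part of the kernel by Young's inequality and the Schwartz decay of $G$, and controlling the near part on $S$ via the pointwise kernel bound, H\"older against the local density $\varepsilon$, and Fubini) is precisely the standard proof of this statement, and your bookkeeping of the constants $\|G\|_{L^1}$, $\|G\|_{L^\infty}$, $|B_1|$ and the Gaussian tail estimate yielding $\bar\ell^2\gtrsim \ln(C_0/\gamma)$ all checks out. The only cosmetic point is that for $p=\infty$ the Fubini step should be replaced by taking a supremum in $x$, which your pointwise near-field bound already delivers directly.
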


\begin{proof}[Proof of Theorem \ref{thrm.main}]

(Part 1)  
Note that $\sup_{0<s<t} s^{(p-3)/2p}\| w\|_{L^p}(s)<\I$ by \eqref{A1'}.  We will shortly specify a value for $\ePhys$. 
Fix $3<p< \I$.
From \eqref{def.wMILD} with $\tau=0$,   we have
\EQ{
\| w(x,t)\|_{L^p} \leq C_B \int_0^t \frac 1 {(t-s)^{1/2}} \big( \| v  w \|_{L^p}  +\|uw\|_{L^p}   \big) \,ds.
}
We choose $\eta$ to be  
\EQ{\label{def.gamma1}
\eta =\frac {\e_1} {c_1-\e_1}.
}
If $|y|\geq \eta^{-1}\sqrt s$, then, by \eqref{A1'} and our requirement on $\eta$,
\[
|v(y,s)|\leq \frac {\e_1} {\sqrt s}.
\]
Hence, 
\[
\| v w\|_{L^p(|y|\geq \eta^{-1}\sqrt s)}  \leq \frac {\e_1} {\sqrt s} \|w\|_{L^p(|y|\geq \eta^{-1}\sqrt s)}.
\]
Assume $T$ is small enough that
\[\frac {  \|w(x,t)\chi_{B_{\eta^{-1} \sqrt t}} \|_{L^p}}  {  \|w(x,t)\chi_{B^c_{\eta^{-1} \sqrt t}} \|_{L^p}} < \ePhys,
\]
for all $0<t<T$.
Using \eqref{cond.algebraicUniqueness}, 
\[
\| v w\|_{L^p(|y|< \eta^{-1}\sqrt s)}  \leq \|v\|_{L^\I} \| w\|_{L^p(|y|< \eta^{-1}\sqrt s)}  \leq \e_1 \frac {c_1} {\sqrt s} \| w\|_{L^p(|y|\geq  \eta^{-1}\sqrt s)} .
\]

The same estimates hold with $v$ replaced by $u$.
Hence, for $t<T$
\EQ{
\| w(x,t)\|_{L^p} &\leq  C_B \int_0^t \frac {(1+c_1)\e_1} {(t-s)^{1/2}s^{1/2+(p-3)/2p}} s^{(p-3)/2p}\| w\|_{L^p}(s) \,ds 
\\&\leq C_B (1+c_1) \e_1 t^{(3-p)/2p} \sup_{0<s<t} s^{(p-3)/2p}\| w\|_{L^p}(s) .
}
Taking 
\[
\ePhys \leq \frac 1 {2 C_B(1+c_1)},
\]
and, after rearranging things, a supremum on the left-hand side,
we have 
\[
\sup_{0<s<T} s^{(p-3)/2p}\| w\|_{L^p}(s)   \leq 0.
\]
In light of \eqref{cond.algebraicUniqueness} we conclude that $w=0$ on $\R^3\times (0,T)$. Global uniqueness follows from the local well-posedness theory and the fact that $u(\cdot,T/2) \in L^\I (\R)$.

If $p=\I$, then we set up our argument slightly differently,\footnote{To illustrate the details which are omitted here, we pursue this case in the proof of Theorem \ref{thrm.mainFrequancy} below, as it requires similar logic.} beginning with,
\EQ{
\| w(x,t)\|_{L^\I} \leq C_B \int_0^t \frac 1 {(t-s)^{1/2+ 3/(2q)}} \|w\|_{L^\I} \big( \| v   \|_{L^q}  +\|u\|_{L^q}   \big) \,ds,
}
and then reason similarly. Ultimately this avoids having to integrate
\[
\int_0^t \frac 1 {(t-s)^{1/2}s}\,ds, 
\]
which diverges.

\bigskip \noindent (Part 2)  We assume \eqref{A2}.
Fix $t_0$, which plays the role of $t$ in the statement of the theorem. Let $M_0 = 2 \max \{  \| u \|_{L^\I}(t_0),\|v\|_{L^\I}(t_0)\}$. 
Then, there exists $T_0= 4\td c_\I M_0^{-2}$ so that $\| u \|_{L^\I}(t)+ \|v\|_{L^\I}(t) \leq 2(\| u \|_{L^\I}(t_0)+ \|v\|_{L^\I}(t_0))$ for all $t_0\leq t\leq t_0+T_0$.
Bilinear estimates imply
\[
\| w(t) \|_{L^\I}\leq \| e^{(t-t_0)\Delta} w(t_0)\|_{L^\I} + 2C_B(t-t_0)^{1/2}  M_0^2.
\]
We have 
\[
2C_B(t-t_0)^{1/2}  M_0^2 \leq \frac {c_2} {2\sqrt t},
\]
if
\[
2C_B(t-t_0)^{1/2}  M_0^2 \leq \frac {c_2} {2\sqrt{T_0+t_0}}.
\]
Choose $t$ to satisfy
\EQ{\label{breakpoint}
t=t_0 + \frac {c_2^2} {16C_B^2 (\td c_\I M_0^2+t_0 M_0^4)}.
}
 
We will show that, if \eqref{A3} holds at time $t_0$ for a choice of parameters to be specified momentarily, then 
\[
\| e^{(t-t_0)\Delta}w(t_0)\|_{L^\I} \leq \frac {c_2} {4 \sqrt t},
\]
leading to a contradiction, namely
\[
\|w(t)\|_{L^\I}\leq \frac {3 c_2 } {4\sqrt t},
\]
as this violates \eqref{A2}.
For this we will use Lemma \ref{lemma.heat} with 
\[
\ga = \frac {c_2} {4 M_0  \sqrt t}.
\]
Define $\bar \ell$, $\be$ and $\e$ according to this choice of $\ga$, the time scale $t-t_0$ and \eqref{eq:therequirementsforlesparsity}. 
If \eqref{A3} holds at time $t_0$, then,
\EQ{
S_t := \{ x\in \R^3: |w(x,t_0)|\geq \be \|w\|_{L^\I}(t_0) \} 
&\subset \bigg\{\frac {2c_3\sqrt {t_0}^a} {(b|x|+\sqrt {t_0})^{a+1}} \geq \be \frac {c_2} {\sqrt {t_0}}\bigg\}
\\&\subset \bigg\{  \bigg(  \frac {2c_3} {\be c_2} \bigg)^{1/(a+1)}   \sqrt {t_0 }    \geq b|x|  \bigg\}.
}  
To ensure $(\epsilon, \be , \bar \ell \sqrt{t-t_0})$-sparseness we choose $b$ so that 
\[
\bigg| \bigg\{  \bigg(  \frac {2c_3} {\be c_2} \bigg)^{1/(a+1)}   \sqrt{t_0}     \geq b|x|  \bigg\} \bigg| \leq \e \big|B_{\bar \ell \sqrt{t-t_0}}\big|,
\]  
namely,
\[
\frac 1 {b^3}:= \frac {\epsilon \bar \ell^3 (t-t_0)^{3/2}} {(2c_3 /(\be c_2))^{3/(a+1)} t_0^{3/2}}. 
\]
Under this choice of parameters, by Lemma \ref{lemma.heat},
\[
\|e^{(t-t_0)\Delta}w(t_0)\|_{L^\I} \leq \frac {c_2} {4\sqrt t},
\]
which we already noted is a contradiction. Therefore, we cannot have that \eqref{A3} holds at any time $t_0$.

\bigskip \noindent (Part 3) Part 3 follows from an argument appearing in \cite{JiaSverakIll} and again  in \cite{Tsai-DSSI,Tsai-book,BP1}. We will need point-wise bounds for some convolutions which we copy from  \cite[Lemma 2.1]{Tsai-DSSI}: Let $a,b\in(0,5)$  and $a+b>3$. Then, 
\EQ{\phi(x,a,b) = \int_0^1 \int_{\R^3} (|x-y|+\sqrt{1-t})^{-a}(|y|+\sqrt{t})^{-b} \,dy\,dt,} 
is well defined for $x\in \R^3$, and
\EQ{\label{ineq:Tsai.integral}\phi(x,a,b) \lesssim R^{-a} + R^{-b} + R^{3-a-b} [ 1+ (1_{a=3}+1_{b=3})\log R],}
where $R=|x|+2$. 
 These estimates can be extended to  {other time intervals} by a change of variable. From \eqref{ineq.solonikov} we have
 \[
|w(x,t)|\leq \int_0^t \int \frac {C_B} {(|x-y|+\sqrt{t-s})^4}\frac {c_1^2} {(|y|+\sqrt s )^2}\,ds,
 \]
 which implies 
\[
|w(x,t)| \lesssim \frac {\sqrt t } {(|x|+\sqrt t)^2},
\]
by the preceding convolution estimates.
Applying this argument two more times yields the advertised result.  We illustrate the first application. We have using the new bound for $|w|$ and \eqref{A1'} that 
\[
B(u,w) \lesssim \sqrt t \int_0^t \int \frac {C_B} {(|x-y|+\sqrt{t-s})^4}|w| |u|\,ds \lesssim  \sqrt t\int_0^t \int \frac {C_B} {(|x-y|+\sqrt{t-s})^4}\frac 1 {(|y|+\sqrt s)^3}\,ds.
\]
The pointwise estimates for the time convolution imply this term is bounded by $ t/(|x|+\sqrt t)^3.$ The same applies to the other terms in the integral expansion for $w$. One more repetition results in the bound $t^{3/2}/(|x|+\sqrt t)^4$.
\end{proof}

\section{Frequency  scenario}

We begin by proving separation rates for individual modes of self-similar solutions.  

 \begin{proof}[Proof of Proposition \ref{prop.frequencyContext}]
Observe that
\[
\frac 1 {|x|^n} \in \dot B^{-n}_{\I,\I}.
\]
This follows from the fact that $|x|^{-n}$ is $(-n)$-homogeneous and the relationship this induces   on $\| \dot\Delta_j (|\cdot|^{-n})\|_{L^\I}$ compared to $\| \dot\Delta_0 (|\cdot|^{-n})\|_{L^\I}$.  
In \cite{BP1} it is shown that $w(x,1)\lesssim (1+|x|)^4$ (this is why we require the solutions be local energy solutions). Hence, $w(\cdot, 1)\in \dot B^{-4}_{\I,\I}$. Additionally, the discrete self-similar relationship between modes can be calculated as in \cite[(2.5)]{BT3}, implying, for a given $j$ and $t$ and letting $2^{2(j-i)}t=1$,
\[
\| \dot \Delta_{j} w(\cdot, t)\|_{L^\I} =  \frac 1 {\sqrt t} \| \dot \Delta_i w(2^{j-i}x,1)\|_{L^\I}\lesssim  \frac 1 {\sqrt t} 2^{4i},
\]
due to membership in $\dot B^{-4}_{\I,\I}$. Hence,
\[
\| \dot \Delta_{j} w(\cdot, t)\|_{L^\I}  \lesssim 2^{4j} t^{3/2}.
\]
The stated conclusion follows after summing over $j<J$.
\end{proof}

Note that we used a global bound on the profile at $t=1$. For discretely self-similar solutions, these bounds are only available away from the origin---it is not known in general whether or not the solutions can be singular on a ball centered at the origin. Additional work would therefore be needed to check that $w(\cdot, 1)\in \dot B^{-4}_{\I,\I}$.

\bigskip 
We now recall a definition of sparseness framed in terms of the Littlewood-Paley decomposition. Compared to physical sparseness, this definition has the advantage of involving fewer parameters. This notion of sparseness  encompasses the spatial version, at least within a certain parameter range, as demonstrated in \cite{ABr}.

\begin{definition}[$L^p$-sparseness in frequency]
\label{def:frequencysparseness}
Let $\be \in (0,1)$ and $J\in \R$. Then a vector field $u_0 \in L^p$ is \emph{$(\be,J)$-sparse in frequency in $L^p$} if 
\begin{equation}
\| \Delta_{<J}u_0\|_{L^p}\leq \be \|u_0\|_{L^p}.    
\end{equation}
\end{definition}

 The following lemma is taken from \cite{ABr}.

\begin{lemma}\label{lemma.heat.frequency}Fix $1\leq p\leq \I$, $t>0$ and $\ga>0$. Let $u_0\in L^p$. There exists $J\in \Z$ satisfying $2^{J}\sim \ga^{-1} t^{-1/2}$ and $\be=\ga/2$ so that, if $u_0$ is $(\be,J)$-sparse in frequency, then  
\begin{equation}
\| e^{t\Delta} u_0 \|_{L^{p}} \leq \ga \|u_0\|_{L^p}.    
\end{equation}
\end{lemma}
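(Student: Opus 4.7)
The plan is to decompose $u_0=\Delta_{<J}u_0+\Delta_{\geq J}u_0$ at a threshold $J$ to be chosen, apply $e^{t\Delta}$ to each piece, and arrange that each contribution is at most $(\gamma/2)\|u_0\|_{L^p}$.

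The low-frequency piece is immediate. Since $e^{t\Delta}$ is convolution with a non-negative, $L^1$-normalized Gaussian, it is a contraction on $L^p$ for every $1\leq p\leq \infty$, so the $(\beta,J)$-sparseness hypothesis yields
\[
\|e^{t\Delta}\Delta_{<J}u_0\|_{L^p}\leq \|\Delta_{<J}u_0\|_{L^p}\leq \beta\|u_0\|_{L^p},
\]
and we set $\beta=\gamma/2$.

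For the high-frequency piece I would combine Young's inequality with a scaling analysis of the kernel. Writing $e^{t\Delta}\Delta_{\geq J}u_0=K_{t,J}\ast u_0$ with $K_{t,J}=\mathcal{F}^{-1}[e^{-t|\xi|^2}(1-\chi(\lambda_J^{-1}\xi))]$, the substitutions $\xi=\lambda_J\eta$, $x=\lambda_J^{-1}y$ reduce matters to controlling $\|\widetilde{K}(\cdot,\tau)\|_{L^1}$, where $\widetilde{K}(\cdot,\tau)=\mathcal{F}^{-1}[e^{-\tau|\eta|^2}(1-\chi(\eta))]$ and $\tau=t\lambda_J^2$. Because $1-\chi(\eta)$ vanishes on $|\eta|\leq 1/2$, the symbol is Schwartz and bounded pointwise by $e^{-\tau/4}$ on its support; integrating by parts in $\eta$ gives $|\widetilde{K}(y,\tau)|\leq C_N\tau^N e^{-\tau/8}(1+|y|)^{-N}$ for arbitrary $N$, and hence $\|K_{t,J}\|_{L^1}\lesssim e^{-ct\lambda_J^2}$, so that
\[
\|e^{t\Delta}\Delta_{\geq J}u_0\|_{L^p}\lesssim e^{-ct\lambda_J^2}\|u_0\|_{L^p}.
\]

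Finally, I would pick $J\in\mathbb{Z}$ with $2^J\sim\gamma^{-1}t^{-1/2}$, so that $t\lambda_J^2\sim\gamma^{-2}$; since $e^{-c\gamma^{-2}}\ll\gamma$ for $\gamma\in(0,1)$, the high-frequency contribution is at most $(\gamma/2)\|u_0\|_{L^p}$, and adding the two pieces gives the claim. The only non-routine step is the $L^1$ kernel estimate, which rests on the scaling argument and integration by parts in frequency; everything else is arithmetic in $\gamma$, $t$, and $J$. Note that the scaling $2^J\sim\gamma^{-1}t^{-1/2}$ is in fact mildly wasteful — a polynomial gain $(t^{1/2}2^J)^{-1}$ obtained by factoring one derivative out of $\Delta_{\geq J}$ would suffice — but it is the natural choice given how sparse data behaves under the heat kernel and matches the analogous scale in the physical-space Lemma~\ref{lemma.heat}.
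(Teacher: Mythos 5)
Your proof is correct and is essentially the argument behind the lemma: the paper states the result without proof, citing \cite{ABr}, where the same low/high split is used---the low-frequency piece is absorbed by the sparseness hypothesis together with the $L^p$-contractivity of the heat semigroup, and the high-frequency piece by the smoothing estimate $\| e^{t\Delta}\Delta_{\geq J}u_0\|_{L^p}\lesssim e^{-ct2^{2J}}\|u_0\|_{L^p}$ (there obtained by summing the standard annulus estimate over dyadic blocks $j\geq J$ rather than by your direct $L^1$ bound on the high-pass heat kernel, a harmless variation). Choosing $2^{J}\sim \gamma^{-1}t^{-1/2}$ makes the exponential factor at most $\gamma/2$ after adjusting the absolute constant in the $\sim$, exactly as you conclude.
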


 We our now ready to prove Theorem \ref{thrm.mainFrequancy}.
 
\begin{proof}[Proof of Theorem \ref{thrm.mainFrequancy}]

\medskip 
\noindent  {(Part 1)} We first prove the $p=\I$ case.  
Applying $\dot \Delta_{<{J_1}}$ to the perturbed Navier-Stokes equations and adopting the abbreviation 
$f_{<J} = \dot \Delta_{<J}$ (as well as similar abbreviations for $\dot \Delta_{\leq J}$, $\dot \Delta_{\geq J}$ and $\dot \Delta_{> J}$)
gives
\[
(\partial_t - \Delta ) w_{< {J_1}} +  \mathbb P \nb \cdot (w\otimes 
 v +v\otimes w 
 + w\otimes 
 w)_{<{J_1}}=0.
\]
Then, 
\[
w_{< {J_1}}(x,t ) =   -\int_0^t e^{(t-s)\Delta} \mathbb P \nb \cdot (w\otimes 
 v +v\otimes w 
 + w\otimes 
 w)_{< {J_1}}(s)\,ds.
\]
Note that, provided $v\in L^\I(0,T;\dot B^{-1}_{\I,\I})$, we have 
\[
\|v_{< {{j}}}\|_{L^\I} \leq \|v\|_{L^\I(0,T;\dot B^{-1}_{\I,\I})} 2^{{j}}.
\]
Also, by support considerations in the Fourier variable, we have
\[
(f_{<{J_1}} \ g)_{<{J_1}}  = (f_{<{J_1}}\  g_{<{J_1}+2})_{<{J_1}}.
\]
The bilinear terms are all bounded the same, as is illustrated in the following estimate where $q$ is taken in $(3,\I)$,
\EQ{
& \bigg| \int_0^t e^{(t-s)\Delta} \mathbb P \nb \cdot (w\otimes 
 v )_{<{J_1}}(s)\,ds\bigg| 
 \\&\leq \cmild  \int_0^t \frac 1 {(t-s)^{1/2}} \|w_{<{J_1}} \otimes v_{< {J_1}+2}
 \|_{L^\I}\,ds + C_B\int_0^t \frac 1 {(t-s)^{\frac 1 2 + \frac 3{2q}}}\|(w_{\geq  {J_1}} \otimes v)_{< {J_1}}\|_{L^q}\,ds
\\&\leq \cmild \cupper   2^{{J_1}+2}\sup_{0<s<t}s^{1/2} \|w_{< {J_1}}\|_\I(s) +\cmild   \int_0^t \frac {\cupper} {(t-s)^{\frac 1 2 +\frac 3 {2 q}} s^{1- \frac 3 {2q}  }}   s^{\frac 1 2} \|w_{\geq {J_1}}\|_{L^\I}(s)\,ds
\\&\leq (\cmild \cupper   2^{{J_1}+2}  + \cmild \cupper \eLP   t^{-1/2} )\sup_{0\leq s\leq t}s^{1/2} \|w_{< {J_1}}\|_\I(s),
}
which holds by \eqref{A1} and \eqref{ineq.bilinear}.
We therefore take $2^{J_1} = 2^{{J_1}(t)}$  and $\eLP$ to satisfy 
\[
\cmild \cupper   2^{{J_1}+2}  + \cmild \cupper \eLP   t^{-1/2} < \frac 1 3 t^{-1/2},
\]
whence obtaining
\[
 t^{1/2} \bigg\| \int_0^t e^{(t-s)\Delta} \mathbb P \nb \cdot (w\otimes 
 v )(s)\,ds\bigg\|_{L^\I} < \sup_{0\leq s\leq t}\frac 1 3 s^{1/2}\|w_{<{J_1}}(s)\|_{L^\I}.
\]
Repeating this for the other terms in the expansion for $w_{< {J_1}}$ and  taking a time-supremum of the left-hand side of the $w_{<{J_1}}$ integral expansion implies  $w=0$.

The $3<p<\I$ case is  similar but we do not need to pass to the $L^q$ norm in our bilinear estimate---this resembles what we did in detail in the proof of Theorem \ref{thrm.main}.

 \medskip 
\noindent  {(Part 2)}    
We assume \eqref{A2}.
Fix $t_0$. Let $M_0 = 2 \max \{  \| u \|_{L^p}(t_0),\|v\|_{L^p}(t_0)\}$. 
Then, there exists $T_0= 4\td c_p M_0^{2p/(3-p)}$ so that $\| u \|_{L^p}(t)+ \|v\|_{L^p}(t) \leq 2(\| u \|_{L^p}(t_0)+ \|v\|_{L^p}(t_0))$ for all $t_0\leq t\leq t_0+T_0$.
Bilinear estimates imply
\[
\| w(t) \|_{L^p}\leq \| e^{(t-t_0)\Delta} w(t_0)\|_{L^p} + 2C_B(t-t_0)^{1/2-3/(2p)}  M_0^2.
\]
We have 
\[
2C_B(t-t_0)^{1/2-3/(2p)}  M_0^2 \leq \frac {c_2} {2t^{1/2-3/(2p)}},
\]
if
\[
2C_B(t-t_0)^{1/2-3/(2p)}  M_0^2 \leq \frac {c_2} {2(T_0+t_0)^{1/2-3/(2p)}}.
\]
Choose $t$ to satisfy
\EQ{ 
t=t_0 + \bigg( \frac {c_2} { 4 M_0^2 C_B (T_0+t_0)^{1/2-3/(2p)}}   \bigg)^{\frac {2p} {p-3}} 
}

We will  obtain the contradiction if 
\[
\| e^{(t-t_0)\Delta}w(t_0)\|_{L^p} \leq \frac {c_2} {4 t^{1/2-3/(2p)}}. 
\]
Lemma \ref{lemma.heat.frequency} will give us a conclusion like
\[
\| e^{(t-t_0)\Delta}w(t_0)\|_{L^p}  \leq \ga \|w(t_0)\|_{L^p},
\]
which is bounded above by $\ga M_0$. We therefore want to use Lemma \ref{lemma.heat.frequency}  with $\ga$ satisfying
\[
\ga M_0 \leq \frac {c_2} {4 {(T_0+t_0)^{1/2-3/(2p)}}}, \text{ and so we fix }\ga   =\frac {c_2} {4M_0 {(T_0+t_0)^{1/2-3/(2p)}}}.
\]
Choosing $\beta = \ga /2$ 
and, recalling $t-t_0$ is given in \eqref{breakpoint}, choosing
\[
2^{J_2} \sim \ga^{-1}(t-t_0)^{-1/2}, 
\]
we see by Lemma \ref{lemma.heat.frequency} that if  $w(t_0)$ is $(\beta,J_2)$-sparse in $L^p$,
then 
\[
\|e^{(t-t_0)\Delta}w(t_0)\|_{L^p} \leq \frac {c_2} {4 t^{1/2-3/(2p)}}.
\]
Therefore, $w(t_0)$ can never be $(\beta,J_2)$-sparse in $L^p$.

\medskip 
\noindent {(Part 3)}  Note that \eqref{A1} and \eqref{A2} together imply
\[
\| w_{\leq J_2}{(t)}\|_{L^p} \sim t^{3/(2p)-1/2} \sim \|w(t)\|_{L^p}.
\]
By \eqref{A1}, $w\in L^\I_t \dot B^{-1}_{\I,\I}$. So, 
$\| w_{< J_3}\|_\I \leq 2 \cupper 2^{( J_3+1)(1-3/p)}$.
We  require that
\[
2 \cupper 2^{( J_3+1)(1-3/p)}\lesssim  t^{3/(2p)-1/2} ,
\]
where the suppressed constant is chosen small enough that
\[
\| w_{< J_3}\|_p \leq \frac 1 2 \| w_{\leq  J_2}\|_{L^p}.
\]
It follows that 
\[
\| w_{J_3 \leq j \leq J_2} \|_{L^p} \geq \frac 1 2 \|w_{\leq J_2}\|_{L^p},
\]
which completes the proof.

\end{proof}

\begin{remark}It may be interesting to point out that, under the assumptions of Theorem \ref{thrm.mainFrequancy}, if \eqref{A1} and \eqref{A2} hold, the latter for $p=\I$,  then there exists $\e_4$ so that,   
\[
\inf_{t>0} \|w\|_{\dot B^{-1}_{\I,\I}}(t) > \e_4.
\] 
In other words, a condition like \eqref{A2}  implies other scaling invariant measurements of the error do not vanish at $t=0$.
To prove this, recall \eqref{def.wMILD} and observe that, for $t>0$,
\EQ{ \label{ineq.mix}
\int_0^\tau e^{(\tau-s)\Delta}\mathbb P\nb \cdot (u\otimes w )(t+s)\,ds
&\leq \cmild
\sup_{t<s' <t+\tau }   \int_0^{\tau} \frac 1 {(\tau-s)^{1/2}} \frac {\cupper^2} {t } \,ds
\\&\leq \frac {\cmild \cupper^2 \tau^{1/2}} {t} 
\\&\leq \frac {c_2} {6 \sqrt{t+\tau}},
}
provided 
\[
\tau =  \frac {c_2^2 t} {72 C_B^2 c_1^4 }.
\]
Other terms in the bilinear part of \eqref{def.wMILD} are handled identically. Therefore, 
\[
\frac \clower {(t+\tau)^{1/2}} \leq \|w(t+\tau)\|_{L^\I} \leq \|e^{(t+\tau)\Delta}w(t)\|_{L^\I} + \frac {c_2} {2\sqrt{t+\tau}},
\]
implying
\[ 
 \clower  \leq \sup_{0<\td \tau<\I} {(t+\td \tau)^{1/2}}  \|e^{(t+\td \tau)\Delta}w(t)\|_{L^\I} \sim \| w(t)\|_{\dot B^{-1}_{\I,\I}}.
\] 
\end{remark}

\bigskip

\section{Discretized   scenario}

We begin by establishing an analogue to Lemmas \ref{lemma.heat} and \ref{lemma.heat.frequency} in the context of the discretized projection operator.
Recall that we consider a fixed lattice of cubes $\{Q_i\}$ with disjoint interiors, volume $h^3$, and whose closures cover $\R^3$. Denote the center of $Q_i$ by $x_i$.  Let  
\[
I_{h,t} u_0 (x) = \sum_{j} \chi_{Q_j}(x)  \sum_{i} \frac 1 {t^{3/2}}e^{-|x_j-x_i|^2/(4t)}  \int_{Q_i} u_0(y)\,dy.
\]
and let 
\[
J_{h} u_0(x) = \sum_j \chi_{Q_j}(x) \frac 1 {|Q_j|} \int_{Q_j} u_0 (y)\,dy.
\] The next lemma should be understood in analogy with Lemmas \ref{lemma.heat} and \ref{lemma.heat.frequency} but where the interpretation of `sparseness' is understood through the length scale  $h$ of the interpolant operator.

\begin{lemma}\label{lemma.HeatDiscrete}  Fix $t>0$, $p\in [1,\I]$ and $\ga >0$. Take $h\lesssim \ga \sqrt t$. If
\[
\| J_hu_0(x)\|_{L^p}\leq \ga/2 \|u_0\|_{L^p},
\]
then 
\[
\| e^{t\Delta}u_0\|_{L^p}  \leq \ga \|u_0\|_{L^p}.
\]
\end{lemma}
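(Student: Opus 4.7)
My plan is to split
\[
e^{t\Delta} u_0 = e^{t\Delta} J_h u_0 + e^{t\Delta}(u_0 - J_h u_0)
\]
and bound each piece by $(\ga/2)\|u_0\|_{L^p}$. The first piece is immediate: since $e^{t\Delta}$ is an $L^p$-contraction (Young's inequality applied to convolution with the heat kernel), the hypothesis yields $\|e^{t\Delta} J_h u_0\|_{L^p} \leq \|J_h u_0\|_{L^p} \leq (\ga/2)\|u_0\|_{L^p}$. The substantive work is to control $v := u_0 - J_h u_0$.

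The key structural observation is that $v$ has mean zero on every cube $Q_i$: since $J_h u_0$ equals the cube average of $u_0$ on $Q_i$, we have $\int_{Q_i} v\,dy = 0$. I would exploit this by writing $\R^3 = \bigcup_i Q_i$ and, on each cube, subtracting the constant $G_t(x - x_i)$ (where $x_i$ is the center of $Q_i$ and $G_t$ is the heat kernel):
\[
e^{t\Delta} v(x) = \sum_i \int_{Q_i} [G_t(x - y) - G_t(x - x_i)]\, v(y)\,dy.
\]
Since $|y - x_i| \lesssim h$ for $y \in Q_i$, the mean value theorem yields the pointwise estimate $|e^{t\Delta} v(x)| \leq h\, (K * |v|)(x)$, where $K(z) := \sup_{|\de| \leq Ch} |\nabla G_t(z - \de)|$ is a mild fattening of $|\nabla G_t|$.

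From here, Young's convolution inequality gives $\|e^{t\Delta} v\|_{L^p} \leq h\, \|K\|_{L^1}\, \|v\|_{L^p}$ uniformly in $p \in [1,\I]$. The scaling $\|\nabla G_t\|_{L^1} \sim t^{-1/2}$, combined with $h \leq \sqrt t$ (from $h \lesssim \ga \sqrt t$ and $\ga \in (0,1)$), shows $\|K\|_{L^1} \lesssim t^{-1/2}$. Since $\|v\|_{L^p} \leq \|u_0\|_{L^p} + \|J_h u_0\|_{L^p} \leq 2\|u_0\|_{L^p}$, I obtain $\|e^{t\Delta} v\|_{L^p} \lesssim (h/\sqrt t)\|u_0\|_{L^p}$, which is at most $(\ga/2)\|u_0\|_{L^p}$ once the implicit constant in $h \lesssim \ga \sqrt t$ is taken sufficiently small. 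Adding the two contributions finishes the proof.

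The main technical obstacle I anticipate is verifying cleanly that $\|K\|_{L^1} \lesssim t^{-1/2}$, i.e., that shifting the argument of $\nabla G_t$ by at most $h \leq \sqrt t$ cannot blow up its $L^1$ mass by more than a universal constant. This is intuitively clear because $\sqrt t$ is precisely the natural length scale of $\nabla G_t$, but it needs a short scaling-plus-dominated-convergence calculation. Once that is in hand, the argument is a direct application of the mean-zero cancellation on cubes plus Young's inequality, and it does not need to invoke the auxiliary operator $I_{h,t}$ defined just before the lemma (though equivalently one could interpret the bound as comparing $e^{t\Delta} u_0$ to a Riemann-sum approximation based on $I_{h,t}$ and then comparing $I_{h,t} u_0$ to $e^{t\Delta} J_h u_0$).
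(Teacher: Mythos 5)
Your argument is correct, and it reaches the same estimate $\|e^{t\Delta}u_0\|_{L^p}\lesssim (h/\sqrt t)\|u_0\|_{L^p}+\|J_hu_0\|_{L^p}$ as the paper, but by a genuinely different decomposition. The paper approximates the \emph{operator}: it compares $e^{t\Delta}u_0$ with the Riemann-sum semigroup $I_{h,t}u_0$ via a mean-value estimate on the Gaussian (producing the $h/\sqrt t$ factor through a discrete Young inequality), and then dominates $\|I_{h,t}u_0\|_{L^p}$ by $\|J_hu_0\|_{L^p}$. You instead approximate the \emph{input}, writing $u_0=J_hu_0+v$ with $v$ mean-zero on each cube, handling $e^{t\Delta}J_hu_0$ by the exact $L^p$-contractivity of the heat semigroup and $e^{t\Delta}v$ by the cube-wise cancellation plus the same Lipschitz estimate on $G_t$ at scale $h\ll\sqrt t$. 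Both routes ultimately rest on the identical analytic fact --- that a fattening of $|\nabla G_t|$ by radius $h\lesssim\sqrt t$ still has $L^1$ norm $\lesssim t^{-1/2}$ --- so neither is deeper than the other; but your version has the small advantage that the $\|J_hu_0\|_{L^p}$ term appears with constant exactly $1$ rather than an implicit constant, so the hypothesis $\|J_hu_0\|_{L^p}\le(\ga/2)\|u_0\|_{L^p}$ is consumed without any renormalization, and all the slack is absorbed into the constant in $h\lesssim\ga\sqrt t$. The one cosmetic point to tidy is your use of $\ga\in(0,1)$ to deduce $h\le\sqrt t$: the lemma only assumes $\ga>0$, but since the conclusion is trivial for $\ga\ge 1$ (the semigroup is already an $L^p$-contraction), you may restrict to $\ga<1$ without loss.
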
 

\begin{proof}
Begin by taking $x\in Q_j$.
We have 
\EQ{
| e^{t\Delta} u_0(x)|\leq |e^{t\Delta} u_0(x) - I_{h,t} u_0(x)| + | I_{h,t} u_0(x)|.
}
Expanding the leading term on the right-hand side gives
\EQ{
 |e^{t\Delta} u_0(x) - I_{h,t} u_0(x)| &\lesssim \sum_{i} \frac 1 {t^{3/2}} \int_{Q_i}  e^{-|x_j-x_i|^2/(4t)} \underbrace{\bigg|   e^{-|x-y|^2/(4t)+|x_j-x_i|^2/(4t)} - 1  \bigg|}_{=: F(x,y,x_j,x_i)}       u_0(y) \,dy 
 \\&\lesssim  
 \sum_{i} F(x,y,x_j,x_i)\frac 1 {t^{3/2}}    e^{-|x_j-x_i|^2/(4t)}       \| u_0\|_{L^p(Q_i)}|Q_i|^{1-1/p}.
}
By the mean value theorem and the fact that the Gaussian is Schwartz, it is possible to show that 
\[
\sup_{i,j} \sup_{x\in Q_j,y\in Q_i} F(x,y,x_j,x_i) \lesssim \frac h {\sqrt t}.
\]
Hence
\EQ{
 \|e^{t\Delta} u_0  - I_{h,t} u_0 \|_{L^p(Q_j)} &\lesssim  \frac h {\sqrt t} |Q_j|^{1/p}\sum_{i} \frac 1 {t^{3/2}}    e^{-|x_j-x_i|^2/(4t)}       \| u_0\|_{L^p(Q_i)}|Q_i|^{1-1/p}
 \\& \lesssim  \frac h {\sqrt t} \sum_{i} \frac {h^3} {t^{3/2}}    e^{-|x_j-x_i|^2/(4t)}       \| u_0\|_{L^p(Q_i)},
}
which is a discrete convolution.
We now apply $\ell^p$ to the sequence $\{\|e^{t\Delta} u_0  - I_{h,t} u_0 \|_{L^p(Q_j)}\}$ and use Young's inequality to obtain 
\[
 \|e^{t\Delta} u_0 - I_{h,t} u_0\|_{L^p(\R^3)} \lesssim  \frac h {\sqrt t} \|u_0\|_{L^p(\R^3)}  \sum_{i} \frac {h^3} {t^{3/2}}    e^{-|x_i|^2/(4t)} \lesssim  \frac h {\sqrt t} \|u_0\|_{L^p(\R^3)}.
\]

We also observe that 
\EQ{
|I_{h,t} u_0(x)|&=  \bigg|\sum_{i}\chi_{Q_i}(x) \frac {h^3 } {t^{3/2}} e^{-|x_j-x_i|^2/(4t)} \frac 1 {|Q_i|}\int_{Q_i} u_0(y)\,dy \bigg|  \lesssim |J_hu_0(x)|,
}
and, so,  
\[
\| I_{h,t}u_0\|_{L^p(\R^3)}\lesssim  \| J_h u_0 \|_{L^p(\R^3)}.
\]

Combining the above observations and taking $h\lesssim \ga \sqrt t$, we obtain
\[
\| e^{t\Delta}u_0 \|_{L^p(\R^3)} \lesssim  \frac h {\sqrt t}\|u_0\|_{L^p(\R^3)} +  \| J_h u_0 \|_{L^p(\R^3)}\leq \ga \|u_0\|_{L^p(\R^3)}.
\]
\end{proof}

\begin{proof}[Proof of Theorem \ref{thrm.mainDiscrete}]
We include details for $3<p<\I$.  
Our starting point is 
\EQ{
\| w(t)\|_{L^p} \leq C_B \int_0^t \frac 1 {(t-s)^{1/2}} \big( \| v w \|_{L^p}  +\|u  w\|_{L^p}   \big) \,ds,
}
where we take $t<\delta$.
We only consider the case $\|v w\|_{L^p} $ as the treatment of $u$ is identical. 

Recall from the proof of Theorem \ref{thrm.main} that, choosing $\eta$ to be  
\EQ{ 
\eta =\frac {\e_1} {c_1-\e_1},
}
we have 
\[
\| v w\|_{L^p(|y|\geq \eta^{-1}\sqrt s)}  \leq \frac {\e_1} {\sqrt s} \|w\|_{L^p(|y|\geq \eta^{-1}\sqrt s)}.
\]
We now choose $\bar h=\bar h(s)$ so that $B_{\eta^{-1}\sqrt s}(0) \subset  Q_0$, where $Q_0$ is the cube centered at the origin with edge-lengths $\bar h$---that is, $\bar h\geq 2 \eta^{-1}\sqrt s$. As $\e_1$ was fixed in the proof of Theorem \ref{thrm.main}, we replace $\e_1$ with $\e_3$ and will subsequently adjust its value.

We next write
\[
\| v w\|_{L^p(|y|< \eta^{-1}\sqrt s)} \leq \| v J_{\bar h}w\|_{L^p(|y|< \eta^{-1}\sqrt s)} +\| v (w-J_{\bar h}w)\|_{L^p(|y|< \eta^{-1}\sqrt s)}.
\]
For the first term on the right-hand side we have   
\[
\| v w\|_{L^p(|y|< \eta^{-1}\sqrt s)} \leq \bigg\| v 	\frac 1 {\bar h^3}\int_{Q_0} w(y)\,dy	\bigg\|_{L^p(Q_0)} \leq c_1 c_4 s^{3/(2p) - 1/2} s^{1/4} \bar h^{-3/2},
\]
where, because $u$ and $v$ are $L^{3,\I}$-weak solutions, we have $\|w\|_{L^2}(t)\leq c_4 t^{1/4}$, for a constant $c_4$ depending on $\|u_0\|_{L^{3,\I}}$.
We further restrict $\bar h$  so that 
\[
 s^{3/(2p) - 1/2} s^{1/4} \bar h^{-3/2}\leq \frac {\e_3} {\sqrt s} \|w\|_{L^p}(s).
\]
For the second term on the right-hand side we have 
\[
\| v (w-J_{\bar h}w)\|_{L^p}  \leq \frac {c_1 \e_3} {\sqrt s} \|w\|_{L^p}(s),
\]
by assumption. 

Combining these bounds we obtain
\EQ{
\| w(t)\|_{L^p} &\lesssim_{c_1,c_4} \e_3 \int_0^t \frac 1 {(t-s)^{1/2}} \frac 1 {s^{1-3/(2p)}} {s^{1/2-3/(2p)}} \|w(s)\|_{L^p}  \,ds
\\&\leq \frac 1 2 t^{3/(2p)-1/2} \sup_{0<s<t}{s^{1/2-3/(2p)}}  \|w(s)\|_{L^p},
}
provided $\e_3$ is chosen small compared to $c_1$ and $c_4$ and
\[
\bar h = \max\bigg\{ 2 \frac {c_1-\e_3} {\e_3} \sqrt s, \bigg( {t^{3/4}}  \frac { t^{3/(2p)-1/2} } {\e_3 \|w\|_{L^p(t)}  }   \bigg)^{2/3}\bigg\}.
\]
This is enough to conclude $w\equiv 0$.

As we have seen in the proofs of Theorems \ref{thrm.main} and \ref{thrm.mainFrequancy}, the case $p=\I$ follows similarly but we need to initialize our argument with the estimate
\EQ{
\| w(t)\|_{L^\I} \leq C_B \int_0^t \frac 1 {(t-s)^{1/2 + 3/(2q)}} \big( \| v w \|_{L^q}  +\|u  w\|_{L^q}   \big) \,ds,
}
where $q$ can be any value in $(3,\I)$.

(Part 2) 
This proof is essentially identical to the proof of part 2 of Theorem \ref{thrm.mainFrequancy}. The only difference is that we replace  $2^{J_2}$ with $h^{-1}$.
We then see by Lemma \ref{lemma.HeatDiscrete} that, if  
\[  
\| J_h w(t_0)\|_{L^p} \leq \frac \ga 2 \| w(t_0)\|_{L^p},
\]
then 
\[
\|e^{(t-t_0)\Delta}w(t_0)\|_{L^p} \leq \frac {c_2} {4t^{1/2-3/(2p)}}.
\]
This implies that we \textit{cannot} have 
\[
\| J_h w(t_0)\|_{L^p} \leq \frac \ga 2 \| w(t_0)\|_{L^p}.
\]

\end{proof}

\section{Conditional predictability criteria}

\begin{proof}[Proof of Theorem \ref{thrm.conditionalPredCrit}]  We have 
\[
\partial_t \| w\|_{L^2}^2 + 2  \| \nb w\|_2^2 \leq 2\int w\cdot \nb u w\,dx.
\]
Note that 
\[
\| \Delta_{>J} w \|_2\leq C 2^{-J} \|\nb w\|_{2},
\]
which is just a Bernstein inequality.
Hence,
\[
\partial_t \| w\|_{L^2}^2 +C2^{2J} \| \Delta_{>J}w\|_{2}^2 +     \| \nb w\|_2^2 \leq C \| u\|_{L^\I} \|\nb w\|_{2 }\|w\|_2 \leq C^2 \|u\|_{L^\I}^2 \| w\|_2^2 + \|\nb w\|_{2 }^2.
\]
We alternatively have
\[
\partial_t \| w\|_{L^2}^2 +C2^{2J} \| \Delta_{>J}w\|_{2}^2 +     \| \nb w\|_2^2  \leq C \|\nb u\|_{L^\I} \| w\|_2^2.
\]
Assume that at some  time (which everything that follows occurs at) and some $K>0$ that
\[
\|\Delta_{\leq  J} w\|_{L^2}^2 \leq K \| \Delta_{>J}w \|_{L^2}^2.
\]
Then,
\[
\| w\|_2^2 \leq \| w_{\leq J} \|_2^2 + \|w_{\geq J} w\|_2^2 \leq  (K+1) \|w_{\geq J} \|_2^2.
\]
And, provided 
\[
C 2^{2J} \| \Delta_{>J}w\|_{2}^2 \geq  2(K+1)C^2 \|u\|_{L^\I}^2  \|\Delta_{>J} w\|_2^2,
\]
or
\[
C 2^{2J} \| \Delta_{>J}w\|_{2}^2 \geq  2(K+1)C \|\nb u\|_{L^\I}  \|\Delta_{>J} w\|_2^2,
\]
which is implied if
$(K+1)^{1/2} \min \{\|u\|_{L^\I} , \sqrt{\|\nb u\|_{L^\I}}	\}\lesssim 2^{J}$,
we obtain 
\[
\partial_t \| w\|_{L^2}^2+ \frac C {2} 2^{2J} \| \Delta_{>J}w\|_{2}^2  \leq 0.
\]
Noting again that  $\| w\|_2^2\leq  (K+1) \|w_{\geq J} \|_2^2$ we improve this to 
\[
\partial_t \| w\|_{L^2}^2+ \frac {C2^{2J}}{(K+1)} \|  w\|_{2}^2  \leq 0.
\]

We now think of $J$ as fixed and define $K$   according to $(K+1)^{1/2} \min \{\|u\|_{L^\I} , \sqrt{\|\nb u\|_{L^\I}}	\}= c_5 2^{J}$ where $c_5$ is a universal constant consistent with the suppressed constant above. In this case we see that, if 
\[
\frac{ \|\Delta_{\leq  J} w\|_{L^2}^2 } {\| \Delta_{>J}w \|_{L^2}^2 }\leq  \bigg(  \frac {c_5 2^{2J}}  { \min \{\|u\|_{L^\I} , \sqrt{\|\nb u\|_{L^\I}}\}^2} -1 \bigg),
\]
then 
\[
\partial_t \|w\|_{L^2}^2  + \frac {C \min \{\|u\|_{L^\I} , \sqrt{\|\nb u\|_{L^\I}}\}^2} {2c_5} \|w\|_{L^2}^2  \leq \partial_t \| w\|_{L^2}^2+ \frac C {2} 2^{2J} \| \Delta_{>J}w\|_{2}^2  \leq 0,
\]
and the error is non-increasing.
The preceding condition makes sense provided $K\geq 0$, which implies 
\[
2^{2J} \geq \frac {\min \{\|u\|_{L^\I} , \sqrt{\|\nb u\|_{L^\I}}\}^2} {c_5}.
\]

\bigskip The proof for the discretized operator is identical once we observe that, in place of Bernstein's inequality, we have by the Poincar\'e inequality that 
\[
\| w - J_h w\|_{L^2} \leq C h \|\nb w\|_{L^2}.
\]
Then, we just replace $2^J$ with $h^{-1}$, $\Delta_{\leq J} w$ with $J_h w$ and $\Delta_{>J} w$ with $w-J_h w$  throughout the proof.

 \end{proof}

\begin{remark} One can use sparseness to get a similar result but it seems sub-optimal compared to the energy methods employed above. In particular, it is possible to prove that for $T>0$ given, there exist $J$ and $h$ so that $2^{-J} \sim h \sim M^{-1}$ where $M= \sup_{0<s<T}(\| u\|_{L^\I} + \| v\|_{L^\I})(s)$ and,
  if 
  \[
   \| w_{\leq J} \|_{L^2}\leq \frac 1 4 \|w \|_{L^2}
    \text{ or }
    \|J_h w\|_{L^2}\leq \frac 1 4 \|w \|_{L^2},
    \]
    for all $t\in (0,T)$, then 
    \[
    \sup_{0<s<T} \|w(s)\|_{L^2}\leq 2 \| w(0)\|_{L^2}.
    \]
To execute the sparseness argument using   \eqref{def.wMILD}, we need $M$ to depend on $\|u\|_{\I}$ \textit{and} $\|v\|_{\I}$. This is not the case if we use energy methods   due to the standard cancellation. This is why only a single quantity in Theorem \ref{thrm.conditionalPredCrit} needs to be finite. Furthermore, the decay rate in the proof of Theorem \ref{thrm.conditionalPredCrit} does not follow obviously from the sparseness argument.  So, even though mild solution methods  allow us to suppress the $L^2$ norm, they seem sub-optimal compared to energy methods.

\end{remark}

\section{The energy separation  of Jia and \v Sver\' ak's  hypothetical  solutions}

\begin{proof}[Proof of Proposition \ref{prop.JS}]
We consider the solutions in \cite{JiaSverakIll} which are guaranteed to exist under what Jia and \v Sver\' ak label spectral condition (B)---see \cite[Theorem 5.2]{JiaSverakIll}. The proof of \cite[Theorem 5.2]{JiaSverakIll} is not written explicitly, but follows the same logic as the proof of \cite[Theorem 5.1]{JiaSverakIll}. In particular, \cite[Theorem 3.1]{JiaSverakIll} is applied to non-unique solutions with $O(|x|^{-1})$ data.  We will label these solutions as $u_i$ for $i=1,2$. The difference between the proof of \cite[Theorem 5.1]{JiaSverakIll} and the proof of \cite[Theorem 5.2]{JiaSverakIll} is that $\phi=0$ for both $u_i$ in the latter---this is because under spectral condition (B), both $u_i$ are self-similar. In the statement of \cite[Theorem 3.1]{JiaSverakIll}, this means that $\td a$ is a self-similar local energy solutions to \eqref{eq.ns} while $\td b=0$. For each $u_i$, the application of \cite[Theorem 3.1]{JiaSverakIll} produces a perturbation $\td u_i$ so that $u_i+\td u_i$ is another solution to \eqref{eq.ns}. The initial perturbation is in $L^4$ and, roughly speaking, cuts off the tail of $u_i$ so that $u_i +\td u_i$ has better decay that $u_i$. Our observation is that, as $t\to 0$, the perturbation part vanishes and $u_i+\td u_i$ becomes close to $u_i$ near the origin. This means that, near the origin, the difference between $u_1+\td u_1$ and $u_2+\td u_2$ is close to that of $u_1-u_2$, which is self-similar. 

The main ingredients in our argument are \cite[(3.32) \& (3.33)]{JiaSverakIll},
which state
\[
\lim_{t\to 0^+}t^{1/2}\|\nb \td u_i  \|_{L^4} = 0 \text{ and }  \lim_{t\to 0^+} \| \td u_i - u_0 \|_{L^4} =0,
\]
where $i=1,2$.
These will allow us to show that, for any $\e$, there exists $T_\e$ so that 
\EQ{\label{target}
\| (u_i+\td u_i)-u_i\|_{L^2(B(0,1)}^2(t)\leq \epsilon t^{1/2},
}
for $t\in (0,T_\e]$. Noting that for $t\leq 1$
\[
t^{1/2} \| u_1 - u_2 \|^2_{L^2(B(0,1))}(1)  =\| u_1 - u_2 \|^2_{L^2(B(0,t^{1/2}))}(t)  \leq   \| u_1 - u_2 \|^2_{L^2(B(0,1))}(t). 
\]
we get the result by, e.g., taking $\sqrt \epsilon = \| u_1 - u_2 \|_{L^2(B(0,1))}(1)  / 8$ as this implies
\EQ{
\|(u_1+\td u_1) - (u_2 + \td u_2)\|_{L^2(B(0,1))} &\geq \| u_1 - u_2 \|_{L^2(B(0,1))}- \|\td u_1\|_{L^2(B(0,1))} - \|\td u_2\|_{L^2(B(0,1))} 
\\&\geq t^{1/4} \| u_1 - u_2 \|_{L^2(B(0,1))}(1)  - t^{1/4}\| u_1 - u_2 \|_{L^2(B(0,1))}(1)  / 4.
}

Importantly, we assume that the initial errors are supported off of $B(0,2)$. 
To show \eqref{target}, we   use the local energy inequality \eqref{ineq:CKN-LEI}.  Let $\phi\geq 0$ belong to $C_c^\I(\R^3)$ evaluate to $1$ on $B(0,1)$ and have  support in $B(0,2)$. Note that for small enough $t$ we have 
\[
 \int_0^t \int \td p_i \td u_i \cdot \nb \phi \,dx\,dt \lesssim C\int_0^t \int_{B(0,2)\setminus B(0,1)} \frac 1 {(|x|+\sqrt s)^2}  |\td u_i|\,dx\,ds \lesssim  t \|\td u_i\|_{L^4} \leq \frac \e  4 t^{1/2},
\]
since $t=o(t^{1/2})$.
Other terms in which derivatives fall on $\phi$ can be similarly bounded, where $t$ is restricted so their sum is less than $\e t^{1/2}/2$. The delicate term is the critical order drift term
\[
\int_0^t \int (\td u_i\cdot \nb \td u_i ) \td a  \phi \,dx\,ds.
\]
To bound this note that $\td a$ is a local energy solution to \eqref{eq.ns} (indeed, $\td a = u_i$) and so satisfies \eqref{ineq.apriorilocal}.  Consequently,
\[
\int_0^t \int (\td u_i\cdot \nb \td u_i )\td a   \phi \,dx\,ds  \lesssim_{\td a} \int_0^t \| \td u_i\|_{L^4}\| \nb \td u_i\|_{L^4} \,ds.
\]
In view of \cite[(3.32) \& (3.33)]{JiaSverakIll}, by taking $t$ small we obtain 
\[
\int_0^t \int (\td u_i\cdot \nb \td u_i ) \td a   \phi \,dx\,ds  \leq \frac 1 4 \epsilon \int_0^t \frac 1 {s^{1/2}} \,ds = \frac 1 2 \epsilon t^{1/2}.
\]
Noting that the initial errors are zero on the support of $\phi$ and considering these bounds in the context of the  local energy inequality, we obtain
\[
\int_{B(0,1)} |\td u_i|^2(x,t) \,dx  \leq \e t^{1/2},
\]
for sufficiently small $t$.
\end{proof}

\section*{Acknowledgements}

The research of Z.~Bradshaw was supported in part by the  NSF via grant DMS-2307097.

\bigskip 
\noindent Zachary Bradshaw, Department of Mathematical Sciences, 309 SCEN,
University of Arkansas,
Fayetteville, AR 72701. \url{zb002@uark.edu}
 
\end{document}